\newcommand{\R}{{\Bbb R}}
\crefname{hypothesis}{Hypothesis}{Hypotheses}
\title{Bistable wavefronts in the delayed Belousov-Zhabotinsky reaction\thanks{\funding{The work of Karel Has\'ik, Jana Kopfov\'a and Petra N\'ab\v{e}lkov\'a was supported  by the institutional support
for the development of research organizations I\v{C}O 47813059. Sergei Trofimchuk  was  also partially  supported by FONDECYT (Chile),   project 1231169. }}}
\author{Karel Has\'ik\thanks{Mathematical Institute, Silesian University, 746 01 Opava, Czech Republic
  (\email{karel.hasik@math.slu.cz}, \email{jana.kopfova@math.slu.cz}, \email{petra.nabelkova@math.slu.cz}).}
\and Jana Kopfov\'a \footnotemark[2]
\and Petra N\'ab\v{e}lkov\'a \footnotemark[2]
\and Sergei Trofimchuk \thanks{Instituto de Matem\'aticas, Universidad de Talca, Casilla 747,
Talca, Chile, corresponding author
  (\email{trofimch@inst-mat.utalca.cl}).}}
\begin{document}

\maketitle

% REQUIRED
\begin{abstract}
 We study the Murray adaptation of the  Noyes-Field  five-step model of the Belousov-Zhabotinsky (BZ) reaction in the case when a tuning parameter $r$,  
which determines the level of the bromide ion far ahead of the propagating wave, is bigger than 1 and when the delay in generation of the bromous acid is taken into account. The existence of wavefronts in the delayed BZ system was previously established only in the monostable situation with $r \in (0,1]$, the physically relevant bistable situation where $r >1$  (in real experiments $r$ varies between 5 and 50) was left open.  We complete the study by  showing that the BZ system with $r >1$ admits monotone traveling fronts. Note that one of the stable equilibria of the BZ model is not isolated. This circumstance  does not allow the direct application of  the topological or analytical methods previously elaborated for  the analysis of the   existence of bistable waves. 
 \end{abstract}

% REQUIRED
\begin{keywords}
  Bistable model, delay, traveling front, Belousov-Zhabotinsky reaction
\end{keywords}

% REQUIRED
\begin{AMS}
   35C07, 35R10, 35K57
  \end{AMS}

\section{Introduction} Adapting  the Noyes-Field  five-step model \cite{MurV1} 
to describe  the propagation of wave bands in the Belousov-Zhabotinsky (BZ for short)  reaction, J.D. Murray in  \cite{Mur1,Mur2} proposed the reaction-diffusion system 
\begin{equation}\label{1}
\begin{array}{ll}
     u_t(t,x) = 
     D_u\Delta u(t,x)  + u(t,x)(1-u(t,x)-w(t-h,x)),
    &    \\
     w_t(t,x) = D_w\Delta w(t,x)  -b u(t,x)w(t,x), \ u, w \geq 0, \ x \in \R^2,& 
\end{array}%
\end{equation}
(with $h=0$) to describe planar waves $(u,w)=(\phi,\theta)(\nu\cdot x+ct)$, $(\phi,\theta)(-\infty)= (0,r),$ 
$(\phi,\theta)(+\infty)= (1,0),$ $r, b, D_u, D_w \geq 0,  \|\nu\|=1$, $0 \leq u \leq 1, \ 0 \leq w \leq r, $ propagating 
 in a thin layer of reactant solution filled in a Petri plate. 
The variables $u, w$ are dimensionless and proportional to the bromous acid and bromide ion concentrations, respectively. See \cite{gibbs,Mur1,Mur2,NIU} for the derivation of the model (\ref{1}) (without delay) and the chemistry around it. A positive delay $h$ was later incorporated in 
(\ref{1}) by J.~Wu and X.~Zou \cite{wz}, paper \cite{TPTb} provides additional motives for this inclusion. 

In order to visualise  the quasi-monotonicity properties of (\ref{1}), it is convenient to introduce a new variable $v = 1-w$.  Then, 
assuming that $D_u=D_w=1$ and  that the level $r$ of concentration $w$ of the bromide ion far ahead of the propagating wave
is known and fixed (in the real experiments $r$ can vary from 5 to 50, $b$ can take values from 2.5 to 12.5 \cite{Mur1}), we  transform (\ref{1}) into the system 
\begin{equation}\label{1r}
\begin{array}{ll}
     u_t(t,x) = \Delta u(t,x)  + u(t,x)(1-r-u(t,x)+rv(t-h,x)),
    &    \\
     v_t(t,x) = \Delta v(t,x)  +b u(t,x)(1-v(t,x)), \ u, v \in [0,1], \  \ x \in \R^2,& 
\end{array}%
\end{equation}
and look for the non-negative wave solutions $(u,v)(t,x)=(\phi,\psi)(\nu\cdot x+ct)$ satisfying the normalised boundary conditions 
$(\phi,\psi)(-\infty)= (0,0), 
(\phi,\psi)(+\infty)= (1,1)$ and  $0 \leq \phi \leq 1, \ 0 \leq \psi \leq 1$.  It is known \cite{MF,TPTa} that the profiles  $\phi(s), \psi(s)$ of such wavefronts (if they exist) are strictly monotone functions: in fact, 
 $\phi'(s), \psi'(s)>0$ for all $s \in \R$. 
Importantly, the value of $r$  determines the mathematical nature of the  BZ system. Namely, if $r \in (0,1)$, 
 the system  (\ref{1}) is monostable non-degenerate and  is  monostable degenerate when $r=1$; on the other hand, with $r >1$  
 the BZ system  is bistable \cite{TPTa}. Note that the standard mono- and bi-stability definitions \cite[p. 158]{Volp} require  the stability analysis of steady states of the following %bidimensional 
 system 
 \begin{equation}\label{1hr}
\begin{array}{ll}
    \phi'(t) = \phi(t)(1-r-\phi(t)+r\psi(t-h)),
    &    \\
    \psi'(t) = b\phi(t)(1-\psi(t)), \ (\psi, \phi) \in [0,1]\times [0,1].& 
\end{array}%
\end{equation}
describing spatially  homogeneous solutions for (\ref{1r}).  It is easy to see that the spectra of linearizations of (\ref{1hr}) at $(0,0)$ and $(1,1)$ are $\{0,1-r\}$ and $\{-1,-b\}$, respectively. Thus the equilibrium $(1,1)$ is  exponentially stable independently on the size of the positive parameters $h, b, r$.  
Since the equilibrium $(0, 0)$ is not isolated, its analysis is more delicate: see Appendix \ref{ApA} where the (non-asymptotical) stability of solution $(0,0)$ for (\ref{1hr}) with $r >1$  is proven.

The local and non-delayed case ($h=0$) is simpler to study since it allows the plane phase analysis to find the bistable waves. This technically substantial work was realised  by Y. Kanel in \cite{Ka2}. It was  preceded by the abundant numerical evidences for the presence of bistable fronts in the non-delayed  BZ system  \cite{man, Mur1,Mur2, QUI} and by the theoretical work of R.G. Gibbs \cite{gibbs}. In the latter paper,  on the base  of  geometrical-analytical arguments,   the existence [respectively, non-existence] of bistable waves for (\ref{1}) admitting only diffusion in bromous  acid, $D_u=1, D_w=0$ [respectively, admitting only diffusion in bromide ion, $D_u=0, D_w=1$] was established. It is  also worth to mention here  the recent study by H.-T. Niu {\it et al} \cite{NIU} where the existence of V-shaped traveling fronts in (\ref{1}) with $r >1, h=0$ was proved by constructing suitable super- and  sub-solutions.

Now, the existence, stability and uniqueness of wavefronts in the BZ system with spatiotemporal interactions (which includes (\ref{1}) as a particular case) was previously analysed  only in the monostable situation $r \in (0,1]$ under different conditions imposed on the parameters $c, h, r, b$ or interaction kernels, see \cite{bn,DQ,HAN2022103423,MF,ma,TPTa,TPTb,Troy,Volp,LinaWang,wz,Zh} and the references therein. The chemically relevant bistable case $r >1$  (recall that in real experiments $r$ varies between 5 and 50) was left open.  
We believe that  one of the main reasons of such a  circumstance is that the stable equilibrium $(0,0)$ of the quasi-monotone BZ model (\ref{1r}) is not isolated. This does not allow the direct application of  topological or analytical methods previously elaborated for the  analysis of  the existence of bistable waves in the monotone non-local or delayed systems (see \cite{FW,V20} for more references). 
In this paper, we propose a different approach  to complete the study  by showing that the delayed BZ system with $r >1$ admits monotone traveling fronts. As a by-product,  we obtain an alternative proof for the above mentioned Kanel result \cite[Theorem 4]{Ka2} concerning the existence of bistable fronts in the non-delayed BZ model.  Our main conclusion  is the  following: 

\begin{theorem} \label{mainT} For every $r>1, b>0, h \geq 0,$ there exists a unique $c_\star=c_\star(b,r,h)$ for which system (\ref{1r}) admits a wavefront  $(u,v)(t,x)=(\phi,\psi)(\nu\cdot x+c_\star t)$.  Furthermore,   $\phi'(t) >0,$  $\psi'(t)>0$, $\phi(t) < \psi(t)$ for all $t \in \R$  and  $c_\star \in (0,2)$. 
\end{theorem}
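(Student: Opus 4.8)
The plan is to obtain the wavefront as a limit of monotone solutions of truncated boundary-value problems, and then to pin down the admissible speed by a continuity argument on the left-hand limit of the profile. With $s=\nu\cdot x+ct$ the profile must solve
\begin{equation*}
  \phi''-c\phi'+\phi\bigl(1-r-\phi+r\,\psi(\cdot-ch)\bigr)=0,\qquad
  \psi''-c\psi'+b\,\phi(1-\psi)=0 .
\end{equation*}
For a fixed candidate speed $c>0$ I would recast this as a fixed-point problem on a cone of non-decreasing functions with range in $[0,1]$, using the exponentially decaying Green kernel of $\frac{d^2}{ds^2}-c\frac{d}{ds}-k$ with a large $k>0$; the substitution $v=1-w$ from the introduction is made precisely so that the reaction becomes quasimonotone and the associated integral operator order preserving. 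To circumvent the non-isolation of $(0,0)$ I would \emph{not} work on all of $\R$ at once: on each interval $[-n,n]$ I solve the problem with boundary data $(\phi,\psi)(-n)=(0,0)$, $(\phi,\psi)(n)=(1,1)$ (extending $\psi$ by $0$ to the left of $-n$ so the delayed term is defined), obtaining via the ordered sub- and super-solutions $(0,0)\le(\phi,\psi)\le(1,1)$ and monotone iteration a solution $(\phi_{c,n},\psi_{c,n})$, which a sliding argument on the finite interval shows to be strictly increasing in $s$; a translation then normalises $\phi_{c,n}(0)=\tfrac12$.

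The second step is uniform control and passage to the limit. The inclusion $0\le\phi_{c,n},\psi_{c,n}\le1$ is built in; standard interior estimates, with constants depending only on the (bounded) coefficients and not on $n$, give $n$-independent bounds on the first two derivatives on compact sets, and a uniform bound on the width of the transition layer keeps the normalisation point away from $\pm n$. By compactness one extracts, along a subsequence, an entire monotone profile $(\phi_c,\psi_c)$ with $\phi_c(0)=\tfrac12$ and range in $[0,1]^2$. Since the only equilibria of the profile system in $[0,1]^2$ are the points of the segment $\{(0,\kappa):\kappa\in[0,1]\}$ and the point $(1,1)$, monotonicity together with the normalisation forces $(\phi_c,\psi_c)(+\infty)=(1,1)$ and $(\phi_c,\psi_c)(-\infty)=(0,\kappa(c))$ for some $\kappa(c)\in[0,1]$. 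Thus for each admissible $c$ one obtains a monotone heteroclinic from $(0,\kappa(c))$ to $(1,1)$, and the wavefront of Theorem~\ref{mainT} is exactly the case $\kappa(c)=0$.

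The crux is therefore to show that $\kappa(c)=0$ for a unique $c=c_\star$, necessarily in $(0,2)$, and here $r>1$ enters essentially. Linearising the $\phi$-equation at $(0,\kappa)$ produces the coefficient $1-r+r\kappa$ in front of $\phi$, so a heteroclinic reaching $(0,\kappa)$ with $\kappa>(r-1)/r$ would make $\phi_c$ oscillate (complex characteristic exponents) whenever $c<2\sqrt{1-r+r\kappa}$, which is impossible for a nonnegative profile; this gives $\kappa(c)\le\frac{r-1}{r}+\frac{c^2}{4r}$, in particular $\kappa(c)<1$ whenever $c<2$, and it shows that in this regime the approach to the left endpoint proceeds along genuine positive characteristic exponents rather than being controlled by the zero exponent — the center direction responsible for the non-isolation of $(0,0)$. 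It then remains to follow $\kappa(c)$, equivalently the limiting profile, as $c$ varies: a comparison argument (the exponentially stable state $(1,1)$ invades the merely marginally stable state $(0,0)$) gives $c_\star>0$; the bound $c_\star<2$ reflects that $u$ is a subsolution of the Fisher--KPP equation $\tilde u_t=\Delta\tilde u+\tilde u(1-\tilde u)$ — indeed at $c=2$ the limiting profile degenerates into the parasitic front $\psi\equiv1$ joining $(0,1)$ to $(1,1)$, which solves the profile system precisely when $c\ge2$, since then $\phi''-c\phi'+\phi(1-\phi)=0$ admits a monotone $0$-to-$1$ solution; and a continuity argument in $c$ then produces the threshold $c_\star$ with $\kappa(c_\star)=0$. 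Uniqueness of $c_\star$ is the standard bistable rigidity — two monotone fronts of distinct speeds for the quasimonotone system cannot coexist, by the comparison principle. I expect this selection step, i.e.\ making the dependence of the construction on $c$ rigorous at the degenerate endpoint and excluding partial heteroclinics to interior points of the segment $\{(0,\kappa)\}$, to be the main obstacle of the argument.

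With $c_\star$ fixed the remaining assertions are comparatively soft: $\phi'>0$ and $\psi'>0$ are already guaranteed by the monotonicity results for BZ wavefronts recalled above \cite{MF,TPTa}, while the strict inequality $\phi<\psi$ on $\R$ follows from a maximum-principle argument for $z:=\psi-\phi$, which vanishes at $\pm\infty$: at a putative interior minimum $s_0$ with $z(s_0)\le0$ one has $\psi(s_0-c_\star h)\le\psi(s_0)\le\phi(s_0)$, whence $z''(s_0)-c_\star z'(s_0)\le(r+b-1)\,\phi(s_0)\bigl(\phi(s_0)-1\bigr)<0$, contradicting $z''(s_0)\ge0$ and $z'(s_0)=0$ — and this is exactly where $r>1$ and $b>0$ are used, through $r+b>1$. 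Specialising the whole construction to $h=0$ reproduces the existence part of Kanel's theorem \cite[Theorem 4]{Ka2} without recourse to the planar phase portrait.
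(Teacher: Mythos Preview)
Your approach differs fundamentally from the paper's. You vary the speed $c$ directly, building for each $c$ a monotone profile via truncation on $[-n,n]$ and monotone iteration, and then hope to select $c_\star$ as the value where the left limit $\kappa(c):=\psi_c(-\infty)$ vanishes. The paper instead perturbs the \emph{equation}: it adds $-\epsilon\psi(1-\psi)$ to the second reaction term so that $(0,0)$ becomes an isolated hyperbolic equilibrium, applies the Fang--Zhao abstract bistable theory to the perturbed system (this requires, in particular, checking a non-trivial counter-propagation hypothesis by constructing explicit sub- and super-solutions), obtains for each small $\epsilon>0$ a genuine $(0,0)$-to-$(1,1)$ front $(\phi_\epsilon,\psi_\epsilon)$ with speed $c_*(\epsilon)\in(0,2)$, and finally lets $\epsilon\to0^+$.

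The selection step you yourself flag as ``the main obstacle'' is a genuine gap, not a technicality. Your non-oscillation bound $\kappa(c)\le(r-1)/r+c^2/(4r)$ is only an upper bound; it never forces $\kappa(c)=0$, and no intermediate-value argument is available since $\kappa(c)\ge0$ always. Worse, for a fixed $c>0$ the profile system typically admits several monotone heteroclinics landing at different points $(0,\kappa)$ of the degenerate segment, and monotone iteration between the ordered constants $(0,0)$ and $(1,1)$ produces an extremal solution with no mechanism to single out $\kappa=0$; your scheme gives no reason why the truncation limit should prefer the bistable connection over one of the parasitic monostable ones. This is exactly the obstruction created by the non-isolation of $(0,0)$ that the paper highlights. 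The paper's limit $\epsilon\to0$ faces the same danger (that $\psi_\star(-\infty)>0$) and resolves it by a different idea: it constructs, uniformly in $(\epsilon,c)\in[0,\epsilon_0]\times[0.75c_\star,2]$, a two-dimensional strongly unstable local manifold of $(0,0)$ for the profile system, shows via a weighted contraction argument that this manifold depends continuously on $(\epsilon,c)$ down to $\epsilon=0$, and then tracks the perturbed fronts $(\phi_\epsilon,\psi_\epsilon)$ on that manifold as $\epsilon\to0$, thereby forcing the limit profile to lie on the $\epsilon=0$ manifold and hence to converge exponentially to $(0,0)$. An analogue of this manifold-tracking step is the missing ingredient in your outline; without it, the truncation-and-continuity scheme cannot exclude convergence to a heteroclinic with $\kappa>0$. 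Your treatment of the remaining assertions ($\phi<\psi$, $c_\star<2$, uniqueness) is in line with the paper, which likewise attributes monotonicity, ordering, uniqueness and positivity of $c_\star$ to \cite{TPTa} and obtains $c_\star<2$ by comparison with the KPP equation.
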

It should be noted that the last three inequalities in the above statement and the properties of the uniqueness and positivity of $c_\star$ were already proven in \cite{TPTa}. Nevertheless,  we include this information in the theorem to draw more complete picture of bistable wavefronts in the BZ system. 

Consequently,  Theorem \ref{mainT} will be proved if we establish the existence of positive wavefronts. We will reach this goal  by  approximating (\ref{1r})  with  the perturbed system \eqref{2rr}, see Section 2, parametrised by a small positive parameter $\epsilon$. We show that for each small $\epsilon >0$  perturbed equations  have a monotone bistable wavefront $(u,v)= (\phi_\epsilon, \psi_\epsilon)$ traveling with a positive speed $c_*(\epsilon)$. The existence of such a  wave is obtained by applying Fang and Zhao general theory  of bistable waves for  monotone  evolution systems \cite{FW}  to our particular situation. By itself, this application is not  trivial and needs  verifications of several  fundamental hypotheses. The most complicated of them, in view of the bidimensional character of the system, is  the counter-propagation property \cite{FW}. It can not be obtained  as  in the one-dimensional case, \cite{FW} and requires some additional ideas. We prove the counter-propagation by  constructing   appropriate sub-solutions and super-solutions in Appendices C and D. We hope that our approach can be useful in other similar situations. 
Next, in Section 3, we prove the existence of traveling waves by obtaining uniform a priori estimates for $c_\star$ and  taking the limit of $(\phi_\epsilon, \psi_\epsilon)$ when $\epsilon \to 0^+$.  Due to the degeneration of the equilibrium $(0,0)$, the latter task can be considered as the most difficult  in  the whole proof. We overcome this obstacle  by tracking the evolution of  $(\phi_\epsilon, \psi_\epsilon)$ as $\epsilon \to 0^+$ on the two-dimensional unstable manifold $\Gamma(\epsilon, c_\epsilon), \ \epsilon >0,$ of the equilibrium $(0,0)$. The existence and properties of $\Gamma(\epsilon, c_\epsilon)$ are given  in Theorem \ref{invman}. 
The paper is concluded with 4 Appendices. We have already mentioned two of them, C and D.  In addition, in Appendix  A the bistable character of  equation (\ref{1hr}) is shown followed by the detailed analysis of the perturbed system and accompanied with phase space analysis numerical pictures. In Appendix B we are checking another fundamental hypothesis (the unordering property) for the theory  in \cite{FW}. Again, the verification of this hypothesis is  more difficult than in one-dimensional case due to the reducibility of matrices obtained from the linearisation of the perturbed system at its stable equilibria. See Subsection 2.1 for more details.

\section{Monotone wavefronts for a perturbed BZ system}
\label{sec:monotone}

\subsection{Perturbed BZ system: existence of monotone wavefronts} \hfill \\

In order to prove Theorem \ref{mainT}, we have to solve  the  boundary value problem  which consists   from the differential equations  for the wave profiles $\phi, \psi$:  
 \begin{equation}\label{b1hr}
\begin{array}{ll}
    \phi''(t) -c\phi'(t)+ \phi(t)(1-r-\phi(t)+r\psi(t-ch))=0,
    &    \\
    \psi''(t) -c\psi'(t) +b\phi(t)(1-\psi(t))=0, & 
\end{array}%
\end{equation}
and the boundary conditions    $(\phi, \psi)(-\infty) =(0,0)$, $(\phi, \psi)(+\infty) =(1,1)$.

 Solutions of the above  problem will be obtained as limits (when  $\epsilon \to +0$) of the positive wavefronts to   the perturbed non-degenerate system 
 \begin{equation}\label{difference1hr}
\begin{array}{ll}
    \phi''(t) -c\phi'(t)+ \phi(t)(1-r-\phi(t)+r\psi(t-ch))=0,
    &    \\
    \psi''(t) -c\psi'(t)+(b\phi(t)-\epsilon \psi(t))(1-\psi(t))=0.& 
\end{array}%
\end{equation}
Note that the whole evolution system is given by the system of two reaction-diffusion equations (with $\epsilon >0$)
\begin{equation}\label{2rr}
\begin{array}{ll}
     u_t(t,x) =  u_{xx}(t,x)  + u(t,x)(1-r-u(t,x)+rv(t-h,x)),
    &    \\
     v_t(t,x) = v_{xx}(t,x)  + (b u(t,x)- \epsilon v(t,x))(1-v(t,x)), \ u, v \geq 0, \ x \in \R.& 
\end{array}%
\end{equation}
The proof of existence of  wavefronts for  (\ref{2rr})  is our main goal in this subsection: 
\begin{theorem}\label{2.1}  For every $b, h >0, r >1$ and sufficiently  small positive $\epsilon$,  the system (\ref{2rr}) has a monotone bistable wavefront 
 $u=\phi_\epsilon(x+c_*(\epsilon)t), \ v=\psi_\epsilon(x+c_*(\epsilon)t)$. 
\end{theorem}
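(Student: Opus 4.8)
The plan is to realise \eqref{2rr} as a monotone evolution system with a bistable structure between the constant states $(0,0)$ and $(1,1)$, and then to obtain the wavefront from the abstract existence theory of Fang and Zhao \cite{FW}; the substance of the argument is the verification of the hypotheses of that theory, one of which requires a genuinely new input. First I would note that, on the order interval $[0,1]^2$, the reaction terms of \eqref{2rr} are cooperative: the $u$-equation depends on the delayed variable $v(t-h,x)$ with non-negative derivative $ru$, and the $v$-equation depends on $u$ with non-negative derivative $b(1-v)$. Hence \eqref{2rr} generates an order-preserving semiflow $\{Q_t\}_{t\ge 0}$ on $\mathcal C=C([-h,0],\mathcal X)$, where $\mathcal X$ is the space of bounded uniformly continuous maps $\R\to[0,1]^2$; the order interval between the constant functions $(0,0)$ and $(1,1)$ is positively invariant, and parabolic smoothing supplies the local compactness required in \cite{FW}.

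Next I would analyse the equilibrium structure. Apart from $(0,0)$ and $(1,1)$, the only constant equilibria of \eqref{difference1hr} in $[0,1]^2$ are the two intermediate states $(0,1)$ and $E_\epsilon=(\epsilon\psi^\ast/b,\ \psi^\ast)$ with $\psi^\ast=(r-1)/(r-\epsilon/b)$, the latter bifurcating out of $(0,(r-1)/r)$ as $\epsilon\to 0^+$; here the smallness of $\epsilon$ is essential, since at $\epsilon=0$ one has instead the whole segment $\{(0,s):0\le s\le 1\}$ of equilibria (this degeneracy is precisely what the perturbation removes). Computing the characteristic equations one checks that, for all small $\epsilon>0$, the end states are exponentially stable --- the spectra of the linearizations at $(0,0)$ and $(1,1)$ are $\{1-r,-\epsilon\}$ and $\{-1,-(b-\epsilon)\}$, lying in the open left half-plane because $r>1$ --- whereas $(0,1)$ is completely unstable and $E_\epsilon$ has a positive real characteristic root, so every intermediate equilibrium is unstable and the two intermediate states are mutually incomparable. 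Since the linearizations at the stable equilibria $(0,0)$ and $(1,1)$ are reducible (triangular), the semiflow is not strongly monotone near these points and the ``unordering'' hypothesis of \cite{FW} is not automatic; I would verify it directly, which is the role of Appendix~B.

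The hard part will be the counter-propagation property of \cite{FW}. Informally, it requires that the monostable sub-dynamics of \eqref{2rr} on the order interval above the intermediate equilibrium $E_\epsilon$ propagate in one direction, and those on the order interval below $E_\epsilon$ in the opposite direction, with the corresponding spreading speeds summing to a positive number, so that no intermediate plateau can survive and a single transition from $(0,0)$ to $(1,1)$ is forced. In the scalar, or purely local one-dimensional, setting this follows by comparison with the two monostable component waves \cite{FW}; but because \eqref{2rr} is genuinely two-component and, on top of that, delayed, that comparison is unavailable. Instead I would construct, for every sufficiently small $\epsilon>0$, explicit ordered pairs of sub- and super-solutions of \eqref{difference1hr} (equivalently, of \eqref{2rr}) moving with strictly positive, respectively strictly negative, speed and squeezing the two monostable invasions against each other. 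This construction --- the new ingredient --- is carried out in Appendices~C and~D.

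With all the hypotheses in hand, the bistable-wave theorem of \cite{FW} applies and yields a wavefront $u=\phi_\epsilon(x+c_*(\epsilon)t),\ v=\psi_\epsilon(x+c_*(\epsilon)t)$ of \eqref{2rr} connecting $(0,0)$ at $-\infty$ to $(1,1)$ at $+\infty$, unique up to translation and travelling with a uniquely determined speed $c_*(\epsilon)$. Monotonicity of $(\phi_\epsilon,\psi_\epsilon)$ is inherited from the order-preserving character of $\{Q_t\}$ and is upgraded to strict monotonicity by the strong maximum principle, while positivity of $c_*(\epsilon)$ follows from the sub-solution produced in the previous step (alternatively, from the comparison arguments already used in \cite{TPTa}). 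This proves Theorem~\ref{2.1}.
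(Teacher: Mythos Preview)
Your plan coincides with the paper's: apply the Fang--Zhao abstract bistable theory to the semiflow generated by \eqref{2rr} on $C([-h,0]\times\R,[0,1]^2)$, verify the bistability hypothesis (A5) and the counter-propagation hypothesis (A6), and invoke Appendices~C--D for the nontrivial sub/super-solution constructions. So the approach is the same. Two points, however, are misattributed or incomplete.

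First, the reducibility of the linearizations at $O=(0,0)$ and $\beta=(1,1)$ does not bear on the \emph{unordering} of intermediate equilibria (that is what Appendix~B is for, and it is about fixed points of the time-$\tau$ map, including possible periodic orbits). Reducibility obstructs the \emph{strong stability from above/below} part of (A5): with a triangular Jacobian the linear semiflow is not strongly positive, so negative eigenvalues alone (your $\{1-r,-\epsilon\}$ and $\{-1,-(b-\epsilon)\}$) do not yield the strict inequalities $Q^\sigma(\eta e)\ll \eta e$ that \cite{FW} requires. The paper resolves this by constructing explicit exponential super-solutions for the kinetic system (Claims~1 and~2), not via Appendix~B; your sketch should separate these two issues.

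Second, (A6) must be verified for \emph{each} intermediate equilibrium, and there are two: $\alpha_1=(0,1)$ and $\alpha_2=E_\epsilon$. You only discuss $E_\epsilon$. The $\alpha_1$ case is easy (the dynamics decouple to scalar KPP equations, giving $c_+^*(O,\alpha_1)=2\sqrt{\epsilon}$ and $c_-^*(\alpha_1,\beta)=2$; this is Claim~3), but it must be stated. Also, the paper's super-solution for the $[O,\alpha_2]$ interval is stationary, yielding only $c_+^*(O,\alpha_2)\ge 0$; the strict positivity comes entirely from $c_-^*(\alpha_2,\beta)>0$, so your ``strictly negative speed'' is slightly off. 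Finally, uniqueness and positivity of $c_*(\epsilon)$ are not part of Theorem~\ref{2.1}; the paper establishes positivity separately in Lemma~\ref{LE23}.
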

\begin{proof} We use the theory of abstract monotone bistable  evolution systems developed in \cite{FW}. 
Thus we need to recall some definitions, notations and results from  \cite{FW}.  In particular, $\mathcal X^+$  stands for the subset $C([-h,0], [0,1]^2)$  of  the Banach lattice  $C([-h,0], \R^2)$ of continuous functions  with the uniform norm and standard  ordering of the elements. The standard order notation here is 
$$(\phi_2, \psi_2) \geq (\phi_1, \psi_1) \quad \mbox{if}\quad  \phi_2(s) \geq \phi_1(s), \  \psi_2(s) \geq \psi_1(s), \ s \in [-h,0],$$
$$(\phi_2, \psi_2) \gg (\phi_1, \psi_1)  \quad \mbox{if}\quad  \phi_2(s) > \phi_1(s),\  \psi_2(s) > \psi_1(s), \ s \in [-h,0].$$ 
Hence, if we set $\mathcal C: =  C([-h,0]\times \R, [0,1]^2)$, then  $\mathcal C$ can be regarded as the space of all continuous functions $\gamma: \R \to \mathcal X^+$.  Using the latter interpretation of $\mathcal C$,  we can equip this  space with the metrizable topology of uniform convergence on compact subsets of $\R$. If $\gamma \in \mathcal C$ is a constant function we can identify it with an element $\gamma \in \mathcal X^+$.  Let   $\mathcal C_\gamma$ and $[O,\gamma]_\mathcal C$ [respectively, $\mathcal X_\gamma$] denote the set of all elements 
 $\zeta \in \mathcal C$ [respectively, $\xi \in \mathcal X^+$] 
such that 
$0 \leq \zeta(t) \leq \gamma(t)$ for all $t \in \R$ [respectively, $0 \leq \xi \leq \gamma$].  

Note that for each pair of non-negative continuous and bounded initial data $u(s,x)=u_0(s,x) \in [0,1],$ $v(s,x) = v_0(s,x)\in [0,1],$ $(s,x) \in [-h,0]\times \R$,  system \eqref{2rr} has a unique continuous non-negative mild solution $(u,v)= (u(t,x),v(t,x)) \in [0,1]^2,  x \in \R,$ defined for all $t \geq 0$. Here the mild solutions are defined by means of the integral equations 
$$
u(t,x) = (S(t)u_0)(x) + \int_0^tS(t-s)(u(s,\cdot)(M+1-r-u(s,\cdot)+rv(s-h,\cdot)))(x)ds, 
$$
$$
v(t,x) = (S(t)v_0)(x) + \int_0^tS(t-s)(Mv(s,\cdot)+(b u(s,\cdot)- \epsilon v(s,\cdot))(1-v(s,\cdot)))(x)ds, 
$$
with $M= 2+r+br$ and the Gaussian-type action 
$$
(S(t)g)(x):= \frac{e^{-Mt}}{\sqrt{4\pi t}}\int_{-\infty}^{+\infty}e^{-(x-y)^2/(4t)}g(y)dy.
$$
We observe that $(0,0)$ and $(1,1)$ are  solutions of \eqref{2rr} and  the integral operators are monotone with respect to the functions with values in the interval $[0,1]$. Therefore the comparison principle can be applied to the mild solutions of  (\ref{2rr}) (actually, in the sense of definitions in \cite{FW,MS,SZ}, the reaction term in (\ref{2rr}) defines a quasi-monotone functional  with values in  $[0,1]^2$). 
Furthermore,  partial derivatives $u_x(t,x), v_x(t,x)$ are continuous and bounded in the variable $x\in \R$ for each  $t>0$; for instance, 
 $$|u_x(t,x)| \leq  \frac{e^{-Mt}}{\sqrt{\pi t}}|u_0|_\infty +  \int_0^t\frac{e^{-Ms}}{\sqrt{\pi s}}ds \sup_{x \in \R, s \in [0,t]} |u(s,x)(3+br-u(s,x)+rv(s-h,x))|.
$$
Thus  (\ref{2rr}) generates a continuous monotone semi-flow $\mathcal Q^t$ defined on the phase space $\mathcal C=  C([-h,0]\times \R, [0,1]^2)$ by 
$$
\mathcal Q^t(u_0,v_0)(s,x)= (u(t+s,x), v(t+s, x)), \quad s \in [-h,0], \ x \in \R.    
$$
Actually, arguing as in the proof of Theorem 1 in \cite{MS} (see also \cite[p. 517]{SZ}), we see that $(u(t,x), v(t,x))$ is a classical solution of (\ref{1hr}) for $t > h$. Therefore a mild monotone traveling wave $(u,v)(t,x)=(\phi, \psi)(x+ct)$  propagating with the speed $c$, i.e. 
$$
\mathcal Q^t(\phi, \psi)(s,x+cs)= (\phi, \psi)(x+c(t+s)), \quad s \in [-h,0], \ x \in \R, 
$$
is a classical traveling wave.

The system (\ref{2rr}) has 4 equilibria:
$$
O:= (0,0), \quad \beta:= (1,1); \quad \alpha_1:= (0,1); \quad  \alpha_2:= \left(\frac{\epsilon(r-1)}{br-\epsilon},\frac{b(r-1)}{br-\epsilon}\right). 
$$
We also consider the restriction  of the evolution system $\mathcal  Q^t$  on the subset of homogeneous initial data $\mathcal X_\beta$: if  $(\phi_0, \psi_0) \in \mathcal X_\beta$ and $(\phi(t),\psi(t)) = \mathcal  Q^t(\phi_0,\psi_0)(0,0)$, then 
 \begin{equation}\label{11hr}
\begin{array}{ll}
    \phi'(t) = \phi(t)(1-r-\phi(t)+r\psi(t-h)),
    &    \\
    \psi'(t) = (b\phi(t)- \epsilon \psi(t)) (1-\psi(t)).& 
\end{array}%
\end{equation}

The existence of the bistable wave 
will be obtained from  the next proposition  which essentially coincides with Theorems  3.4 and 5.1 in  \cite{FW} adapted to our particular situation: 
\begin{proposition}\label{FW} \cite{FW} Assume that for each $t >0$, the maps $\mathcal  Q^t$ and $\mathcal  Q = \mathcal  Q^1$ satisfy the assumptions 
\begin{itemize}
\item (A1) (Translation invariance) $T_a\mathcal  Q^t = \mathcal  Q^t T_a, \ a \in \R$, where $T_a: \mathcal C \to \mathcal C$ is a spatial shift operator: $T_a(u_0(\cdot, \cdot), v_0(\cdot, \cdot))= (u_0(\cdot, \cdot+a), v_0(\cdot, \cdot+a))$. 
\item  (A3) (Monotonicity) The semi-flow $\mathcal  Q^t$ is order preserving. 
\item  (A4') (Weak compactness)  There exists $s \in (0,h]$ such that:

(i) $\mathcal  Q[(u_0,v_0)](\theta,x)=(u_0,v_0)(\theta+s,x)$ whenever $\theta+s\leq 0$;

(ii) For any $\epsilon \in  (0, s)$, the set $\mathcal  Q[C_\beta]|_{[-s+\epsilon,0]\times \R}$ is precompact;

(iii) For any subset $J \subset C_\beta$ with $J (0,\cdot) \subset  C(\R,[0,1])$ being precompact, the set 
$\mathcal Q[J]|_{[-s,0]\times \R}$ is precompact.

\item  (A5) (Bistability) For each fixed $t>0$, the fixed points $O$ and $\beta$ are strongly stable from above and below,
respectively, for the map $\mathcal  Q^t:{\mathcal  X}_\beta \to {\mathcal X}_\beta$ and any two intermediate equilibria of $\mathcal  Q^t:{\mathcal  X}_\beta \to {\mathcal X}_\beta$ are unordered.

\item  (A6) (Counter-propagation) 
For each $j=1,2$, the time-one map $\mathcal  Q$ is such that 
$$
c_-^*(\alpha_j,\beta)+ c_+^*(O,\alpha_j)>0,
$$
where $c_+^*(O,\alpha_j)$ [$c_-^*(\alpha_j,\beta)$] is the rightward [leftward, respectively] asymptotic speed of propagation in the monostable subsystem $\mathcal  Q: [O, \alpha_j]_\mathcal C\to   [O, \alpha_j]_\mathcal C$ [$\mathcal  Q: [\alpha_j, \beta ]_\mathcal C\to  [\alpha_j, \beta ]_\mathcal C$, respectively]. 
\end{itemize}

Then there exists $c\in \R$ such that $\{ \mathcal  Q^t \}_{t \geq 0}$ admits a nondecreasing traveling wave with speed $c$ connecting $O$ to $\beta$.
\end{proposition}

The translation invariance and monotonicity property follow easily  from the integral equations, we note here that the coefficients of the system do not depend on $t$ and $x$. The proof of the weak compactness property is given in \cite{FW} (pages 2269-2270).
 
In the Appendix B, we analyse the dynamics of $\mathcal  Q^t$ on $\mathcal X_\beta$. In particular, for each $t >0$, we show that any two intermediate equilibria $\gamma_j \not \in \{O, \beta\}$, $j=1,2$,   of $\mathcal  Q^t$ are unordered.
Thus the last part of Assumption (A5) is satisfied and  we only need to show that the equilibria $O$ and $\beta$ meet the stability part of  Assumption (A5).  Since the linear parts of 
(\ref{11hr}) at  $O$ and $\beta$ lead to the reducible matrices (cf. \cite{Smith}), we cannot argue as in the proof of  \cite[Theorem 6.4]{FW}. In Claims 1 and 2 below we present our detailed proof of the required unilateral  stabilities for $O$ and $\beta$. 

Finally,  in view of the bidimensional character of system (\ref{2rr}), Assumption (A6) is the most complicated one. Note that 
the set of fixed points for all maps $\mathcal  Q^t: \mathcal X_\beta \to \mathcal X_\beta$, $t >0$ is precisely $\mathcal E: = \{O, \alpha_1, \alpha_2, \beta\}$. This explains the number of inequalities in (A6) according to  \cite{FW}.  The first inequality ($j=1$) is easy to check since both speeds $c_-^*(\alpha_1,\beta)$ and $c_+^*(O,\alpha_1)$ can be easily calculated,  see Claim 3. The second inequality ($j=2$), however, can not be verified following the techniques used in  \cite{FW}  for  the one-dimensional case.  Thus this case requires some additional ideas. In our paper, we prove the counter-propagation inequality for the interior fixed point $\alpha_2$ by  constructing in Appendices C and D  appropriate sub-solutions and super-solutions. This construction is used in Claim 4 below to establish the fulfilment of the Assumption (A6). 

\vspace{2mm}

\underline {Claim 1:}  The steady state $O$ is strongly stable from above. 

\vspace{2mm}
 
For $0<\epsilon < r-1$ and  small positive $m$, 
$$
0 < m  \leq m_0:= e^{-0.5\epsilon h}\min\left\{0.25, \left(1 -\frac{1}{r}- \frac{\epsilon}{2r}\right)\right\}, 
$$
and $\eta \in (0,1]$, we set 
$$\phi^*(t, \eta): =  \eta\phi^*(t):= \frac{\eta\epsilon m}{3b}   e^{-0.5\epsilon t}, \  \psi^*(t, \eta): =  \eta\psi^*(t):= \eta m e^{-0.5\epsilon t}.  $$
Then, for all $t \geq 0$,  $(\phi^*(t, \eta),\psi^*(t, \eta))$ is a strict super-solution in the sense that 
 \begin{equation}\label{difference1hra}
\begin{array}{ll}
    (\phi^*(t, \eta))' > \phi^*(t, \eta)(1-r-\phi^*(t, \eta)+r\psi^*(t-h, \eta)),
    &    \\
    (\psi^*(t, \eta))' > (b\phi^*(t, \eta)- \epsilon \psi^*(t, \eta)) (1-\psi^*(t, \eta)).& 
\end{array}%
\end{equation}

Consequently, the solution $(\phi(t, \eta),\psi(t,\eta))$ of  the initial value problem \\
$$(\phi(s, \eta),\psi(s, \eta))= (\phi^*(s, \eta),\psi^*(s, \eta)), \ s \in [-h,0],$$ 
for (\ref{11hr}) satisfies 
$$
(\phi(t, \eta),\psi(t, \eta)) < (\phi^*(t, \eta),\psi^*(t, \eta)), \quad t >0. 
$$
Indeed, this inequality obviously holds for all small $t >0$, and if $d>0$ is the leftmost point where the inequality fails
(for example, suppose that $\phi(d, \eta)= \phi^*(d, \eta)$, $\psi(s, \eta)\leq  \psi^*(s, \eta), \ s \leq d$,  then
$$
\phi(d, \eta)(1-r-\phi(d, \eta)+r\psi(d-h, \eta)) =\phi'(d, \eta)\geq 
$$
$$ (\phi^*)'(d, \eta) >  \phi(d, \eta)(1-r-\phi(d, \eta)+r\psi^*(d-h, \eta)), 
$$ 
a contradiction). 

Hence, for all $\sigma >0$, $s \in [-h,0]$, $\eta \in (0,1]$, it holds that
$$
\mathcal Q^\sigma(\eta\phi^*, \eta\psi^*)(s,0) \leq  (\eta\phi^*, \eta\psi^*)(\sigma +s,0) \ll \eta(\phi^*, \psi^*)(s). 
$$
Therefore, using terminology in \cite[p. 2247]{FW}, we can say that the steady state  $O$ is strongly stable from above. \hfill $\square$

\vspace{2mm}

\underline {Claim 2:}  The steady state $\beta$ is strongly stable from below. 

\vspace{2mm}

 Introducing the change of variables $\tilde \phi(t) = 1-\phi(t)$, 
$\tilde \psi(t) = 1-\psi(t)$, we transform (\ref{11hr}) (after suppressing the symbol "tilde" over variables) into the system 
 \begin{equation}\label{11hrl}
\begin{array}{ll}
 \psi'(t) = \psi(t)(-b +\epsilon -\epsilon \psi(t)+ b\phi(t)),
    &    \\   
     \phi'(t) = (r\psi(t-h)- \phi(t))(1- \phi(t)),& 
\end{array}%
\end{equation}
which essentially coincides with (\ref{11hr}). Thus, arguing as above, we find that the exponential functions  
 $$\phi^*(t, \eta): =  \eta\phi^*(t):= \eta mr^2   e^{-\delta t}, \  \psi^*(t, \eta): =  \eta\psi^*(t):= \eta m e^{-\delta t}  $$
 with $$\eta \in (0,1], \quad  0 < \delta < \min\left\{b-\epsilon, {{\frac{(r-1)\ln r}{r\ln r + (r-1)h}}}\right\},$$
 $$
 0 < m \leq m_1:=  \frac{1}{r^2} \min \left(1- \frac{\delta+\epsilon}{b}, 1 -\frac{r\delta}{r-e^{\delta h}}\right), 
 $$
are strict super-solutions for  (\ref{11hrl}).  Consequently, the solution $(\phi(t, \eta),\psi(t, \eta))$ of  the initial value problem $(\phi(s, \eta),\psi(s, \eta))= (\phi^*(s, \eta),\psi^*(s, \eta)), \ s \in [-h,0],$ for (\ref{11hrl}) satisfies 
$$
(\phi(t, \eta),\psi(t, \eta)) < (\phi^*(t, \eta),\psi^*(t, \eta)), \quad t >0. 
$$
Therefore
$$
(1-\phi(t, \eta), 1- \psi(t, \eta)) > (1-\phi^*(t, \eta),1-\psi^*(t, \eta)), \quad t >0, 
$$
so that for each $\sigma >0$, $s \in [-h,0]$, $\eta \in (0,1]$, it holds 
$$
\mathcal Q^\sigma(1-\eta\phi^*, 1-\eta\psi^*)(s,0) = (1-\phi(\sigma +s, \eta), 1- \psi(\sigma +s, \eta))\geq  $$
$$(1-\eta\phi^*, 1-\eta\psi^*)(\sigma +s) \gg (1,1)- \eta(\phi^*, \psi^*)(s). 
$$
By \cite{FW}, this means that the steady state  $\beta=(1,1)$ is strongly stable from below. $\square$

\vspace{2mm}

%%%%%%%%%%%%%%%%%%%%%%%%%%%%%%%%%%%%%%%%%%%%%%%%%%%%%%%%%%%%%%%%%%%%%%%%%%%%%%%%

\underline {Claim 3:} $ c_+^*(O,\alpha_1)=2\sqrt{\epsilon};\quad c_-^*(\alpha_1,\beta)=2.$. 

\vspace{2mm}

By \cite{FW}, to calculate $c_+^*(O,\alpha_1)$ it suffices to analyse the asymptotic behaviour of the solution $(u,v)$ to \eqref{2rr} with  smooth and nondecreasing in $x$ initial functions for which  
$$
u_0(s,x) \equiv 0, \ (s, x)\in [-h,0]\times \R, \quad  
$$  
$$
v_0(s,x) =  m_0e^{-0.5\epsilon s}, \ (s, x)\in [-h,0]\times (-\infty,0], 
$$
$$
v_0(s,x) =  1, \ (s, x)\in [-h,0]\times [1,+\infty),  
$$
where $m_0>0$ is defined in Claim 1.
Clearly, $u(t,x) \equiv 0$ for all $x \in \R$ and $t \geq 0$, and therefore the system (\ref{2rr})  simplifies to the scalar equation 
$$v_t(t,x) = v_{xx}(t,x)  -\epsilon v(t,x)(1-v(t,x)), \ v \geq 0, \ x \in \R.$$
Set $\tilde v(t,x)= 1-v(t,-x)$, then 
$$\tilde v_t(t,x) = \tilde v_{xx}(t,x)  +\epsilon \tilde v(t,x)(1-\tilde v(t,x)), \ \ x \in \R,$$
and the initial data are smooth, nondecreasing and satisfy 
$$
\tilde v(0,x) =  1- m_0,  \ x \geq 0, \quad 
\tilde v(0,x) =  0,  \  x \leq -1.  
$$
It is well known (e.g. see \cite[Example 2, p. 7]{bram}), that there exist  a function $c(t)$, $c(t) = 2\sqrt{\epsilon}\;t\;(1+o(1)), \ t \to +\infty,$ and a positive monotone function $\phi_1$ connecting $0$ and $1$  such that 
$$\sup_{x\in \R}|\tilde v(t,x) - \phi_1(x+c(t))| \to 0, \quad t \to +\infty. $$
Thus, for $x \leq ct,$ with $c < 2\sqrt{\epsilon}$, we find that 
$\sup_{x \leq ct}v(t,x)  \to 0, \quad t \to +\infty$
while  for $c >2\sqrt{\epsilon}$, we find that 
$v(t,ct)  \to 1, \quad t \to +\infty$. 
Since, by the definition in \cite{FW}, 
$$
c_+^*(O,\alpha_1):= \sup\{c \in \R:  \lim_{x \leq nc, n \to +\infty} v(n, x) =0 \}
$$
we obtain that $c_+^*(O,\alpha_1)= 2\sqrt{\epsilon}$. 

Next, we evaluate $c_-^*(\alpha_1,\beta)$. This time, we will study the asymptotic behaviour of the solution $(u,v)$ with initial functions  smooth and nondecreasing in $x$, for which
$$
v_0(s,x) \equiv 1, \ (s, x)\in [-h,0]\times \R, \quad  
$$  
$$
u_0(s,x) =  0, \ (s, x)\in [-h,0]\times (-\infty,-1], 
$$
$$
u_0(s,x) =  1-  m_1r^2   e^{-\delta s}, \ (s, x)\in [-h,0]\times [0,+\infty),
$$
where  $m_1 >0$  and $\delta >0$ are defined in Claim 2.
Thus $v(t,x) \equiv 1$ for all $x \in \R$ and $t \geq 0$, and the system (\ref{2rr})  simplifies to the  equation 
    $$u_t(t,x) =  u_{xx}(t,x)  + u(t,x)(1-u(t,x)),$$
    with nondecreasing and smooth  initial data satisfying $$
u_0(x) =  0, \ x \leq -1, \quad 
u_0(x) =  1-  m_1r^2,  \ x \geq 0. 
$$
In a consequence, arguing as in the previous case, we find that  
$$
c_-^*(\alpha_1,\beta):= \sup\{c \in \R:  \lim_{x \geq - nc, n \to +\infty} u(n, x) =1 \}=2. \hspace{4cm} \square
$$
%%%%%%%%%%%%%%%%%%%%%%%%%%%%%%%%%%%%%%%%%%%%%%%%%%%%%%%%%%%%%%%%%%%%%%%%%%%%%%%%%%%%%%%%%%%%%%%%%%%%%

\underline {Claim 4:} $c_-^*(\alpha_2,\beta)>0; \quad c_+^*(O,\alpha_2)\geq 0$. 

\vspace{2mm}

First, we prove the positivity of  $c_-^*(\alpha_2,\beta)$. Following \cite{FW}, we  study the asymptotic behaviour of solution $(u,v)$ of an initial value problem for (\ref{2rr}) with smooth and nondecreasing in $x$ initial functions which additionally satisfy  the equalities 
$$
u_0(s,x) \equiv \frac{\epsilon(r-1)}{br-\epsilon}=:\alpha_{21}, \ v_0(s,x) = \frac{b(r-1)}{br-\epsilon}=:\alpha_{22}, \ (s, x)\in [-h,0]\times (-\infty,-1], 
$$
$$
u_0(s,x) =  1-  mr^2   e^{-\delta s},\ v_0(s,x) =  1-  me^{-\delta s}, \ (s, x)\in [-h,0]\times [0,+\infty), 
$$
where $m \in (0,m_1)$ will be fixed later. 
In  Appendix C, for each sufficiently small positive $c$,  we indicate the $C^3$-smooth functions $U(t,x), V(t,x),$ which satisfy the relation $\beta=\lim_{n \to +\infty} (U,V)(n,-0.5nc)$ and the inequalities 
$$
U_t(t,x) <U_{xx}(t,x) + U(t,x)(1-r-U(t,x)+rV(t-h,x)), 
$$
$$
V_t(t,x)  < V_{xx}(t,x) +(bU(t,x)-\epsilon V(t,x))(1-V(t,x)), 
$$
in some open neighbourhood of the sector $\mathcal D = \{t \geq 0, x+ct \geq 0	\}$ of the upper half-plane $(x,t)$. In other words, using the terminology of \cite{FT}, the pair $(U(t,x), V(t,x))$ is a regular sub-solution of (\ref{2rr}) in the sector $\mathcal D$. 
In addition,  $U(t,x) > \alpha_{21}$ and  $V(t,x) > \alpha_{22}$ if $x+ct>0$, and $U(t,x) = \alpha_{21}$ and  $V(t,x) =\alpha_{22}$ if $x=-ct$.  Furthermore, for all sufficiently small $m>0$, it  holds that 
$$
U(s,x) \leq u_0(s,x), \quad V(s,x) \leq v_0(s,x),  \quad x \in \R, \ s \in [-h,0]. 
$$
Since $\alpha_{2}= (\alpha_{21},\alpha_{22})$ is the equilibrium of (\ref{2rr}), we  conclude
(again invoking the terminology in \cite{FT})
 that $$(\underline{U}(t,x), \underline{V}(t,x)) = \max\{(U(t,x), V(t,x)), \alpha_2\}$$
is a sub-solution of (\ref{2rr}) in the  upper half-plane $t \geq 0$ satisfying, 
for all sufficiently small $m>0$  the inequalities 
$$
\underline{U}(s,x) \leq u_0(s,x), \quad \underline{V}(s,x) \leq v_0(s,x),  \quad x \in \R, \ s \in [-h,0].
$$

We claim that the solution $(u,v)$ satisfies 
\begin{equation} \label{UVe}
\underline{U}(t,x) \leq u(t,x), \quad \underline{V}(t,x) \leq v(t,x),  \quad x \in \R, \ t \geq -h. 
\end{equation}
Indeed, let $\frak S$ be the set of all $S\geq 0$ such that the last inequalities hold for all $t \in [-h,S]$. Clearly, $0 \in \frak S$.  Set $T = \sup \frak S$ and suppose for a moment that $T\geq 0$ is a finite number. Then, for all $(t,x) \in W:=[T,T+h]\times \R$, 
$$
u_t(t,x) \geq u_{xx}(t,x) + u(t,x)(1-r-u(t,x)+r\underline{V}(t-h,x)), 
$$
$$
v_t(t,x)  = v_{xx}(t,x) +(bu(t,x)-\epsilon v(t,x))(1-v(t,x)), 
$$
so that  $(u(t,x),v(t,x))$ is a regular super-solution in $W$ for the system 
$$
u_t(t,x) =u_{xx}(t,x) + u(t,x)(1-r-u(t,x)+r\underline{V}(t-h,x)), 
$$
$$
v_t(t,x)  = v_{xx}(t,x) +(bu(t,x)-\epsilon v(t,x))(1-v(t,x)). 
$$
Since $(\underline{U}(t,x), \underline{V}(t,x))$ is an (irregular) sub-solution for the same system, Theorem 1 in \cite{FT} shows that actually 
$$
\underline{U}(t,x) < u(t,x), \quad \underline{V}(t,x) < v(t,x),  \quad x \in \R, \ t \in (T, T+h]. 
$$
This inequality contradicts the assumption of  finiteness of  $T$. Thus $T=+\infty$  and the estimates (\ref{UVe}) are proved. 
But then, for each sufficiently small  $c>0$, 
$$
\beta=\lim_{n \to +\infty} (U,V)(n,-0.5nc) =\lim_{x \geq - 0.5nc, n \to +\infty} (\underline{U},\underline{V})(n, x) 
\leq $$
$$\lim_{x \geq - 0.5nc, n \to +\infty} (u,v)(n, x)\leq \beta. 
$$
Thus $\lim_{x \geq - 0.5nc, n \to +\infty} (u,v)(n, x)=\beta$ and, consequently,  $c_-^*(\alpha_2,\beta)>0$.

Finally, to prove that   $c_+^*(O, \alpha_2) \geq 0$, we have to analyse the asymptotic behaviour of solution $(u,v)$ of (\ref{2rr})  with smooth and nondecreasing in $x$ initial functions,   which additionally satisfy
\begin{equation}\label{E27}
\begin{array}{r@{}l}
u_0(s,x) &{}=\alpha_{21}, \ v_0(s,x) =\alpha_{22}, \ (s, x)\in [-h,0]\times [0, +\infty), \\
u_0(s,x)  &{}  =   \frac{\epsilon m}{3b}   e^{-0.5\epsilon s},\ v_0(s,x) =  m e^{-0.5\epsilon s}, \ (s, x)\in [-h,0]\times (-\infty,-1], 
\end{array}
\end{equation}
where $m>0$ is sufficiently small. Following the same line of arguments as in the previous case, we will use a regular super-solution $(U(t,x), V(t,x))$ for (\ref{2rr}) constructed in Appendix D. Then we set 
$$(\overline{U}(t,x), \overline{V}(t,x)) = \min\{(U(t,x), V(t,x)), \alpha_2\},$$
choosing  $m$ to meet the inequalities 
$$
u_0(s,x)< \overline{U}(s,x) , v_0(s,x)) <\overline{V}(s,x), \quad (s,x) \in [-h,0] \times \R. 
$$
The construction of $(U(t,x), V(t,x))$  in Appendix D  assures that $(\overline{U}(t,x), \overline{V}(t,x)) =  \alpha_2$ for all $x \geq 0, t \geq -h$,  and $(\overline{U}(t,x), \overline{V}(t,x)) =  $ $(U(t,x), V(t,x))$ for all $x \leq 0, t \geq -h$. 

Then, arguing in the same way as below (\ref{UVe}), we obtain that the solution $(u,v)$ of the  initial value problem (\ref{E27})  for (\ref{2rr}) satisfies 
$$
(u(t,x), v(t,x)) \leq (\overline{U}(t,x), \overline{V}(t,x)),  \ t \geq -h, \ x \in \R. 
$$ 
Consequently, for each  $c<0$, we obtain  
$$
(0,0)=\lim_{n \to +\infty} (U,V)(n,0.5nc) =\lim_{x \leq 0.5nc, n \to +\infty} (\overline{U},\overline{V})(n, x) 
\geq $$
$$\lim_{x \leq 0.5nc, n \to +\infty} (u,v)(n, x)\geq (0,0). 
$$
Note here that the relation $\lim_{n \to +\infty} (U,V)(n,0.5nc)=(0,0)$ is assured by the form of  the super-solution $(U(t,x), V(t,x))$ constructed  in Appendix D. 
Thus $$\lim_{x \leq 0.5nc, n \to +\infty} (u,v)(n, x)=(0,0)$$ and, consequently,  
$$
c_+^*(O,\alpha_2):= \sup\{c \in \R:  \lim_{x \leq nc, n \to +\infty} (u,v)(n, x) =(0,0) \} \geq 0. 
$$
This completes the proof of Claim 4 and, respectively, of Theorem \ref{2.1}. 
\end{proof} 

%%%%%%%%%%%%%%%%%%%%%%%%%%%%%%%%%%%%%%%%%%%%%%%%%%%%%%%%%%%%%%%%%%%%%%%%%%%%%%%%%%%%%%%%%%%%%%%%%%%

\subsection{Limiting wavefront and the positivity of its speed}\label{s22}
As we  mentioned in the introduction, it is known from \cite{TPTa} that the propagation speed of the bistable wavefront to system (\ref{1}) is positive. This suggests the following:
\begin{lemma}\label{LE23}
The wavefront speeds $c_*(\epsilon)$ from Theorem \ref{2.1} satisfy
$$
+\infty > c_\star :=\liminf_{\epsilon \to 0^+}c_*(\epsilon) >0. 
$$
\end{lemma}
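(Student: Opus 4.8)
We establish the two bounds separately.

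\emph{Finiteness of $c_\star$.} The plan is a Fisher--KPP comparison. Since $\psi_\epsilon\le1$, the first equation of \eqref{difference1hr} gives, for $u(t,x):=\phi_\epsilon(x+c_*(\epsilon)t)$,
$$
u_t-u_{xx}-u(1-u)=r\,\phi_\epsilon\bigl(\psi_\epsilon(\cdot-c_*(\epsilon)h)-1\bigr)\le0,
$$
so $u$ is a subsolution of the KPP equation $w_t=w_{xx}+w(1-w)$; hence $\phi_\epsilon(x+c_*(\epsilon)t)\le\tilde u(t,x)$, where $\tilde u$ solves this equation with $\tilde u(0,\cdot)=\phi_\epsilon$. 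Suppose $c_*(\epsilon)>2$. Near $\xi\to-\infty$ the first equation of \eqref{difference1hr} linearises to $\phi''-c\phi'+(1-r)\phi=0$, whose positive root $\tfrac12\bigl(c+\sqrt{c^2+4(r-1)}\bigr)$ exceeds $1$ for $c=c_*(\epsilon)>2$; a routine barrier argument then gives $\phi_\epsilon(\xi)\le Ce^{\xi}$ for all $\xi\le0$ with some $C\ge1$, so that $\bar w(t,x):=\min\{1,\,Ce^{x+2t}\}$ — the minimum of the KPP-supersolutions $1$ and $Ce^{x+2t}$ — dominates $\phi_\epsilon$ at $t=0$, whence $\tilde u(t,x)\le\bar w(t,x)$. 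Evaluating the resulting chain of inequalities at $x=-c_*(\epsilon)t$ and letting $t\to+\infty$ yields
$$
\tfrac12=\phi_\epsilon(0)\le\bar w\bigl(t,-c_*(\epsilon)t\bigr)=\min\{1,\,Ce^{(2-c_*(\epsilon))t}\}\longrightarrow0,
$$
a contradiction. Thus $c_*(\epsilon)\le2$ for all small $\epsilon>0$, so $c_\star\le2<+\infty$.

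\emph{Positivity of $c_\star$.} By Theorem \ref{2.1} each $c_*(\epsilon)>0$, so $c_\star\ge0$, and it remains to rule out $c_\star=0$. Suppose $\epsilon_n\to0^+$ and $c_n:=c_*(\epsilon_n)\to0$. Using translation invariance, normalise so that $\phi_{\epsilon_n}(0)=\tfrac12$. As $\phi_{\epsilon_n},\psi_{\epsilon_n}$ take values in $[0,1]$ and solve second order ODEs with coefficients bounded uniformly in $n$ and with first order coefficient $c_n\to0$, interior interpolation estimates give uniform bounds on the first two derivatives; hence, along a subsequence (and using $c_nh\to0$ inside the delayed term), $(\phi_{\epsilon_n},\psi_{\epsilon_n})\to(\phi_\infty,\psi_\infty)$ in $C^2_{\mathrm{loc}}(\R)$, where $(\phi_\infty,\psi_\infty)$ is nondecreasing, $[0,1]^2$-valued, satisfies $\phi_\infty(0)=\tfrac12$, and solves
$$
\phi_\infty''+\phi_\infty(1-r-\phi_\infty+r\psi_\infty)=0,\qquad \psi_\infty''+b\,\phi_\infty(1-\psi_\infty)=0 .
$$
By monotonicity the limits $(\phi_\infty,\psi_\infty)(\pm\infty)$ exist, and by Barbalat's lemma $\phi_\infty'(\pm\infty)=\psi_\infty'(\pm\infty)=0$; therefore these limit points are zeros of the reaction term, i.e.\ they belong to $\{(0,s):s\in[0,1]\}\cup\{(1,1)\}$, and $\phi_\infty(0)=\tfrac12$ forces $(\phi_\infty,\psi_\infty)(+\infty)=(1,1)$ and $\phi_\infty(-\infty)=0$.

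Multiplying the second limit equation by $\psi_\infty'\ge0$ and integrating over $\R$ — the boundary term $\bigl[\tfrac12(\psi_\infty')^2\bigr]_{-\infty}^{+\infty}$ vanishes — gives $\int_\R b\,\phi_\infty(1-\psi_\infty)\psi_\infty'\,d\xi=0$ with nonnegative integrand, so $\phi_\infty(1-\psi_\infty)\psi_\infty'\equiv0$. On $\{\phi_\infty>0\}$, a half-line $(\xi_0,+\infty)$ containing $0$, this says $\tfrac{d}{d\xi}\bigl(\psi_\infty-\tfrac12\psi_\infty^2\bigr)\equiv0$, so $\psi_\infty$ is constant there, equal to $\psi_\infty(+\infty)=1$; on $(-\infty,\xi_0]$ one has $\phi_\infty\equiv0$, hence $\psi_\infty''\equiv0$, and the bounded monotone $\psi_\infty$ is constant there too, again equal to $1$ by continuity. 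Thus $\psi_\infty\equiv1$, the first limit equation becomes $\phi_\infty''+\phi_\infty(1-\phi_\infty)=0$, and the conserved quantity $\tfrac12(\phi_\infty')^2+\tfrac12\phi_\infty^2-\tfrac13\phi_\infty^3$ equals $0$ at $\xi=-\infty$ but $\tfrac16$ at $\xi=+\infty$ — a contradiction. Hence $c_\star>0$, and with the previous part, $+\infty>c_\star>0$.

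\emph{Expected main obstacle.} The delicate point is the limit $\epsilon_n\to0^+$: because $(0,0)$ is not isolated, the boundary behaviour at $-\infty$ may degenerate, and the structure of the limit — above all the identity $\psi_\infty\equiv1$ — must be extracted from the normalisation $\phi_\infty(0)=\tfrac12$, the equilibrium set, and the sign of $\phi_\infty(1-\psi_\infty)\psi_\infty'$. It is exactly $\psi_\infty\equiv1$ that reduces the problem to the KPP standing-wave equation, which admits no monotone heteroclinic, thereby closing the argument.
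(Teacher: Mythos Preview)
Your argument is essentially correct, but there is one unjustified step. You write ``By Theorem~\ref{2.1} each $c_*(\epsilon)>0$'', yet Theorem~\ref{2.1} (via the Fang--Zhao machinery) only produces a bistable front with \emph{some} speed $c_*(\epsilon)\in\R$; the sign is not established there, and in fact the paper treats it as unknown up to this point (cf.\ the phrase ``independently of the sign of $c_*(\epsilon)$'' in Corollary~\ref{mdaF}). The fix is immediate: your multiply-by-$\psi_\infty'$ identity works verbatim for $c_\star<0$, since the extra term $-c_\star\int(\psi_\infty')^2$ is then nonnegative and the same vanishing (indeed $\psi_\infty'\equiv0$) follows; the KPP standing-wave contradiction finishes the case. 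So replace ``rule out $c_\star=0$'' by ``rule out $c_\star\le0$''.

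Your route differs from the paper's in three places. \emph{Finiteness:} the paper's Lemma~\ref{LE23} argues that if $c_j\to\infty$ the derivatives of the profiles vanish in the limit, forcing a constant non-equilibrium pair; your KPP barrier is instead the idea of the paper's \emph{later} Lemma~\ref{LE25} (which invokes Bramson rather than your explicit barrier $\min\{1,Ce^{x+2t}\}$). Your step ``$\phi_\epsilon(\xi)\le Ce^\xi$'' is correct but relies on the decay $\phi_\epsilon\sim e^{\lambda_2\xi}$ with $\lambda_2>1$, which the paper extracts from \cite[Lemma~13]{TPTa}; you might cite this rather than call it routine. \emph{Compactness at $c_\star=0$:} the paper does not claim uniform derivative bounds directly; it uses Helly's theorem plus the Green-kernel integral representation of the profiles and dominated convergence. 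Your interior-estimate route is also valid (monotonicity plus the mean value theorem bounds $\int|\phi_n'|$ locally by the oscillation of $\phi_n$, and then the ODE bounds $\phi_n''$), and is arguably more direct. \emph{The key identity:} the paper integrates the second equation itself to get $c_\star(1-\psi_\star(-\infty))=b\int\phi_\star(1-\psi_\star)>0$, which immediately gives $c_\star>0$ when $\psi_\star(-\infty)<1$ (and reduces to KPP when $\psi_\star\equiv1$); your identity $\int b\phi_\infty(1-\psi_\infty)\psi_\infty'\,d\xi=0$ is slightly weaker but your $(\psi_\infty-\tfrac12\psi_\infty^2)'=0$ argument recovers $\psi_\infty\equiv1$ and leads to the same conserved-energy contradiction.
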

\begin{proof}
Fix  $\delta \in (0,1)$ and consider a sequence $\epsilon_j$ monotonically converging to $0,$ such that 
$c_\star :=\lim_{j \to +\infty}c_j,$  $c_j:=c_*(\epsilon_j)$. Let  $(\phi_j(x+c_jt), \psi_j(x+c_jt))$ is the sequence of the corresponding wavefronts. Without loss of generality, we can assume that $\phi_j(0)=\delta$ and  that 
the sequence $c_j$ is converging monotonically  to the number $c_\star$ (finite or infinite).

Clearly, the sequence $(\phi_j(t), \psi_j(t))$ is uniformly bounded on $\R$ and if $c_\star\not=0,$   
the sequence of  the derivatives $(\phi_j'(t), \psi_j'(t))$ is also uniformly bounded. Indeed, note that if $s$ is the critical point for $\phi_j'(t)$, then, for all large $j$, it holds that
$$
|\phi_j'(s)|= |c_j^{-1}\phi_j(s)(1-r-\phi_j(s)+r\psi_j(s-c_jh))| \leq  (1+|c_\star^{-1}|)(1+r). 
$$
This implies the boundedness of the sequence $\{\phi_j'\}$ in the sup-norm, the boundedness of $\{\psi_j'\}$ can be obtained in a similar way.
By differentiating (\ref{difference1hr}), we  also deduce the uniform boundedness of all higher
order derivatives of $\phi_j(t), \psi_j(t)$. Consequently, 
 $$(\phi_j(t), \psi_j(t),\phi_j'(t), \psi_j'(t), \phi_j''(t), \psi_j''(t)) \to (\phi_\star(t), \psi_\star(t),\phi_\star'(t), \psi_\star'(t), \phi_\star''(t), \psi_\star''(t)) $$
 uniformly on compact subsets of $\R$. In particular,  this proves that $c_\star$ is a finite number and $(\phi_\star(t), \psi_\star(t))$ is the solution of system  (\ref{b1hr}). Indeed, otherwise we will obtain from (\ref{difference1hr}) that $\psi'_\star(t)=0$,  $\phi'_\star(t)=0$  for all $t\in \R$, i.e.  $\phi_\star(t)\equiv \delta$, 
 $\psi_\star(t)\equiv 1$. However, this pair of functions does not satisfy  the first equation in  (\ref{b1hr}),  a contradiction.

Hence, $c_\star \in \R.$  Assume first that  $c_\star\not=0$. Then $(\phi_\star(t), \psi_\star(t))$  is a monotone traveling wave for (\ref{b1hr}) propagating with the speed $c_\star$. Since $\phi_\star(0)=\delta$,  {and the solutions are monotone, it follows from the equations (\ref{b1hr})}  that $(\phi_\star(+\infty), \psi_\star(+\infty))=(1,1)$ and $\phi_\star(-\infty) =0$. 
 If  $\psi_\star(-\infty)<1$, we  get from the second  equation in (\ref{b1hr}) that 
 \begin{equation}\label{arg}
c_\star(1-\psi_\star(-\infty))= b\int_\R\phi_\star(s)(1-\psi_\star(s))ds >0, 
\end{equation}
{which implies that $c_\star >0.$}
If $\psi_\star(-\infty)=1$, then  $\psi_\star(t)\equiv 1$ and the first equation in (\ref{b1hr}) implies that $c_\star \geq 2>0$. 

In the case $c_\star =0$, due to the Helly selection theorem, we can  assume that $(\phi_j(t), \psi_j(t))$ converges point-wise to the non-decreasing function $(\phi_\star(t), \psi_\star(t))$. Note that  each $(\phi_j(t), \psi_j(t))$ satisfies the integral equations 
$$
\phi_j(t) = \int_\R K_j(t-s)\phi_j(s)(-\phi_j(s)+r\psi_j(s-c_jh))ds,
$$
$$
\psi_j(t) = \int_\R K_j(t-s)((r-1)\psi_j(s)+ (b\phi_j(s)-\epsilon_j \psi_j(s))(1-\psi_j(s)))ds,
$$
where $K_j(s)= (c_j^2+4(r-1))^{-1/2} e^{\lambda_{\pm,j}s}$ if $\pm s \leq 0$ with
$$
\lambda_{\pm;j}=0.5(c_j\pm \sqrt{c_j^2+4(r-1)}).
$$
Applying the Lebesgue's dominated convergence theorem, we find that 
$$
\phi_\star(t) = \int_\R K_\star(t-s)\phi_\star(s)(-\phi_\star(s)+r\psi_\star(s))ds,
$$
$$
\psi_\star(t) = \int_\R K_\star(t-s)((r-1)\psi_\star(s)+ b\phi_\star(s) (1-\psi_\star(s)))ds,
$$
where $K_\star(s)= 0.5(r-1)^{-1/2} e^{\pm\sqrt{r-1}s}$ if $\pm s \leq 0$.  This
implies that $(\phi_\star(t), \psi_\star(t))$ is also a smooth solution of the system  (\ref{b1hr}) with $c_\star =0$. Repeating the arguments given around the formula (\ref{arg}), we conclude that  $c_\star >0$,  a contradiction.   
\end{proof} 
\begin{corollary}  \label{mdaF}  Suppose that $b >0,  \ r > 1$  and   $( \phi_\epsilon(t), \psi_\epsilon(t))$ is a solution to (\ref{difference1hr}).   Then, for all small $\epsilon >0$, 
\begin{equation}\label{oza}
0< \phi_\epsilon(t) < \psi_\epsilon(t) <1, \quad t \in \mathbb{R}.
\end{equation}
\end{corollary}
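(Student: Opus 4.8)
The plan is to regard $(\phi_\epsilon,\psi_\epsilon)$ as the monotone bistable wavefront furnished by Theorem~\ref{2.1}, so that $\phi_\epsilon$ and $\psi_\epsilon$ are nondecreasing, $[0,1]$-valued, solve \eqref{difference1hr} with $c:=c_*(\epsilon)$, and satisfy $(\phi_\epsilon,\psi_\epsilon)(-\infty)=(0,0)$ and $(\phi_\epsilon,\psi_\epsilon)(+\infty)=(1,1)$. By Lemma~\ref{LE23} we may restrict to $\epsilon$ so small that $c_*(\epsilon)>0$, and we also take $\epsilon<\min\{b,\,r-1\}$. The three inequalities in \eqref{oza} are then treated one at a time: the outer ones, $\phi_\epsilon>0$ and $\psi_\epsilon<1$, are soft consequences of uniqueness for linear second-order ODEs (equivalently, the strong maximum principle), while the middle one, $\phi_\epsilon<\psi_\epsilon$, carries the real content.

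For $\phi_\epsilon>0$, I would note that $\phi_\epsilon$ is a \emph{nonnegative} solution of the linear homogeneous equation $y''-cy'+q(t)\,y=0$ with bounded continuous coefficient $q(t):=1-r-\phi_\epsilon(t)+r\psi_\epsilon(t-ch)$; if $\phi_\epsilon(t_0)=0$, then $t_0$ is a global minimum of $\phi_\epsilon$, hence $\phi_\epsilon'(t_0)=0$, and uniqueness of the initial value problem forces $\phi_\epsilon\equiv0$, contradicting $\phi_\epsilon(+\infty)=1$. Symmetrically, $\chi:=1-\psi_\epsilon\ge0$ solves the linear homogeneous equation $y''-cy'-p(t)\,y=0$ with bounded continuous coefficient $p(t):=b\phi_\epsilon(t)-\epsilon+\epsilon\chi(t)$, and a zero of $\chi$ would again be a global minimum, forcing $\chi\equiv0$ and contradicting $\chi(-\infty)=1$; hence $\psi_\epsilon<1$. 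Once these two bounds hold, $\psi_\epsilon>\phi_\epsilon>0$ gives $\psi_\epsilon>0$ and $\phi_\epsilon<\psi_\epsilon<1$ gives $\phi_\epsilon<1$, so only $\phi_\epsilon<\psi_\epsilon$ remains.

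For this I would set $g:=\psi_\epsilon-\phi_\epsilon$, so that $g(\pm\infty)=0$; subtracting the two equations in \eqref{difference1hr} yields
\[
g''-cg'\;=\;-\bigl(b\phi_\epsilon-\epsilon\psi_\epsilon\bigr)(1-\psi_\epsilon)\;+\;\phi_\epsilon\bigl(1-r-\phi_\epsilon+r\psi_\epsilon(t-ch)\bigr).
\]
Suppose $g(\tau)\le0$ for some $\tau\in\R$. Since $g$ is continuous, bounded and vanishes at $\pm\infty$, it attains its infimum at a finite point $t^*$, where $g(t^*)\le0$, $g'(t^*)=0$ and $g''(t^*)\ge0$. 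At $t^*$ we have $\psi_\epsilon(t^*)\le\phi_\epsilon(t^*)$, and, since $\psi_\epsilon$ is nondecreasing and $ch\ge0$, also $\psi_\epsilon(t^*-ch)\le\psi_\epsilon(t^*)\le\phi_\epsilon(t^*)$; hence
\[
1-r-\phi_\epsilon(t^*)+r\psi_\epsilon(t^*-ch)\;\le\;-(r-1)\bigl(1-\phi_\epsilon(t^*)\bigr)\;\le\;0 ,
\]
while $b\phi_\epsilon(t^*)-\epsilon\psi_\epsilon(t^*)\ge(b-\epsilon)\phi_\epsilon(t^*)>0$ and $1-\psi_\epsilon(t^*)>0$ by the previous step. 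Substituting into the displayed identity and using $g'(t^*)=0$ gives $g''(t^*)<0$, which contradicts $g''(t^*)\ge0$. Hence $g>0$ on $\R$, and \eqref{oza} follows.

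I expect this last step to be the only genuine obstacle. It is precisely there that the positivity of the perturbed speed (so that the delayed argument $t^*-ch$ lies to the left of $t^*$ and the monotonicity of $\psi_\epsilon$ can be invoked), the bistability hypothesis $r>1$, and the smallness $\epsilon<b$ all have to be combined; everything else reduces to the elementary fact that a one-signed solution of a second-order linear homogeneous ODE that touches zero must vanish identically.
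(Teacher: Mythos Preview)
Your proposal is correct and follows essentially the same route as the paper: both arguments first dispose of $\phi_\epsilon>0$ and $\psi_\epsilon<1$ by a uniqueness/monotonicity observation, and then establish $\phi_\epsilon<\psi_\epsilon$ by a maximum-principle argument on the difference, evaluating the subtracted equations at an extremal point and using the monotonicity of $\psi_\epsilon$ together with $c_*(\epsilon)>0$ (from Lemma~\ref{LE23}) to control the delayed term. The only differences are cosmetic: the paper works with $z=\phi_\epsilon-\psi_\epsilon$ and a slightly different chain of inequalities (ending with the factorisation $(r+b-1-\epsilon)(\psi_\epsilon(\tau)-1)<0$), whereas you work with $g=\psi_\epsilon-\phi_\epsilon$ and bound each summand separately.
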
 
\begin{proof} First, we note that, independently of the sign of $c_*(\epsilon)$,  $\phi_\epsilon(t)>0$ for all $t \in \R$. 
Indeed, the monotonicity of $\phi_\epsilon(t)\geq 0$ and the assumption that $\phi_\epsilon(s)= 0$ at some point $s$ implies $\phi_\epsilon(t)\equiv 0$ in virtue of the first equation of (\ref{difference1hr}), a contradiction. A similar argument shows that  $\psi_\epsilon(t) <1, \ t \in \mathbb{R},$ independently of the sign of $c_*(\epsilon)$. 

Set $z(t):=\phi_\epsilon(t)- \psi_\epsilon(t)$.  
We have  $z(-\infty)= z(+\infty)= 0,$
so that the non-negativity of $z$ at some points would imply the existence of 
such  a $\tau$ that $z(\tau)\geq  0,$  $z'(\tau)= 0,$ $z''(\tau)\leq 0$. But then $
\phi_\epsilon(\tau)\geq  \psi_\epsilon(\tau), 
$
$\psi_\epsilon(\tau-c_*(\epsilon) h) \leq \psi_\epsilon(\tau)$ (here we are using Lemma \ref{LE23})
and therefore 
\begin{eqnarray*}
0= z''(\tau) + (1-r-b)\phi_\epsilon(\tau) 
+\phi_\epsilon(\tau)(r\psi_\epsilon(\tau-c_*(\epsilon) h)-\phi_\epsilon(\tau))+  \\ b \phi_\epsilon(\tau)\psi_\epsilon(\tau) + \epsilon \psi_\epsilon(\tau)(1-\psi_\epsilon(\tau)), 
\end{eqnarray*}
$$
0 \leq 1-r-b
+r\psi_\epsilon(\tau-c_*(\epsilon) h)-\phi_\epsilon(\tau)+ b\psi_\epsilon(\tau) + 
\epsilon (1-\psi_\epsilon(\tau)) \leq 
$$
$$
1-r-b
+r\psi_\epsilon(\tau-c_*(\epsilon) h)-\psi_\epsilon(\tau)+ b\psi_\epsilon(\tau) + \epsilon (1-\psi_\epsilon(\tau)) \leq 
$$
$$
1-r-b + \epsilon + (r+b-1-\epsilon)\psi_\epsilon(\tau)= (r+b-1-\epsilon)(\psi_\epsilon(\tau)-1) <0, 
$$
a contradiction proving  (\ref{oza}). 
\end{proof}

\subsection{An upper bound for $c_\star$ and $c_*(\epsilon)$} \hfill

Modifying the arguments from \cite[Section 7]{Mur1}, we will establish here that  $c_\star$  and $c_*(\epsilon)$ are bounded from above with the constant $2$.  In particular, the inequality $c_\star <2$ implies that $\psi_\star(-\infty)<1$, see the discussion  around the formula (\ref{arg}). 
\begin{lemma}\label{LE25}{The wavefront speeds $c_\star$ and $c_*(\epsilon)$ (with small $\epsilon>0$)  belongs to the interval $(0, 2)$. }
\end{lemma}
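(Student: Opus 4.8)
The plan is to establish the two bounds separately, since positivity of $c_\star$ (and hence of $c_*(\epsilon)$ for small $\epsilon$) has essentially been obtained already in Lemma \ref{LE23}, while the upper bound $c_\star < 2$ and $c_*(\epsilon) < 2$ is the new content here. I would first record that positivity of $c_\star$ is Lemma \ref{LE23}, and that for all sufficiently small $\epsilon$ the speeds $c_*(\epsilon)$ are positive as well: indeed, if some subsequence $c_*(\epsilon_j)$ were $\le 0$, the limiting argument in Lemma \ref{LE23} (passing to the limit in \eqref{difference1hr} and using the integral-equation representation when the limit speed is $0$) forces a nonpositive limiting speed for a genuine wavefront of \eqref{b1hr}, contradicting $c_\star > 0$. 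So the real task is the upper estimate.

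For the upper bound I would adapt the Murray-type argument referenced from \cite[Section 7]{Mur1}. Consider a wavefront $(\phi_\epsilon,\psi_\epsilon)$ of \eqref{difference1hr} traveling with speed $c=c_*(\epsilon)>0$; by Corollary \ref{mdaF} we have $0<\phi_\epsilon<\psi_\epsilon<1$. Look at the first equation near $-\infty$. Since $\psi_\epsilon(-\infty)$ exists and $\phi_\epsilon(-\infty)=0$, I would examine the linearization of the $\phi$-equation at the leading edge. Near $t\to-\infty$, $\phi_\epsilon''-c\phi_\epsilon'+\phi_\epsilon(1-r+r\psi_\epsilon(t-ch)-\phi_\epsilon(t))=0$; the coefficient $1-r+r\psi_\epsilon(t-ch)$ is negative when $\psi_\epsilon(-\infty)<1$ (which holds precisely because $c<2$ — but we cannot assume that yet). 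The cleaner route, in the spirit of Murray, is to integrate: dividing the first equation by $\phi_\epsilon$ and integrating over $\R$ is delicate because of the $-r+\cdots$ term, so instead I would use the following comparison. Because $\psi_\epsilon$ is increasing and $\psi_\epsilon<1$, on any half-line $(-\infty,t_0]$ we have $r\psi_\epsilon(t-ch)\le r\psi_\epsilon(t_0-ch)=:\rho$; if $\rho<1$ we'd be in a stable regime, but in general $\phi_\epsilon$ satisfies $\phi_\epsilon''-c\phi_\epsilon'+\phi_\epsilon(1-\phi_\epsilon+r\psi_\epsilon(t-ch)-r)\le 0$ is not immediate. The simplest correct approach: note $\psi_\epsilon(t-ch)\le \psi_\epsilon(t)$ and $\psi_\epsilon(t)\ge \phi_\epsilon(t)$... actually the honest move is to use $1-r-\phi_\epsilon(t)+r\psi_\epsilon(t-ch) < 1$ (since $r\psi_\epsilon(t-ch)<r$ and $\phi_\epsilon>0$), giving $\phi_\epsilon''-c\phi_\epsilon'+\phi_\epsilon < \phi_\epsilon''-c\phi_\epsilon'+\phi_\epsilon(1-r-\phi_\epsilon+r\psi_\epsilon(t-ch))+ r\phi_\epsilon = ...$; I need to assemble this so that $\phi_\epsilon$ is a subsolution of the linear equation $y''-cy'+y=0$ near $-\infty$.

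Concretely: from the first equation, $\phi_\epsilon''-c\phi_\epsilon' = -\phi_\epsilon(1-r-\phi_\epsilon+r\psi_\epsilon(t-ch)) = \phi_\epsilon(r-1+\phi_\epsilon-r\psi_\epsilon(t-ch)) \ge \phi_\epsilon(r-1+\phi_\epsilon - r) = \phi_\epsilon(\phi_\epsilon-1) > -\phi_\epsilon$, using $\psi_\epsilon<1$ and $0<\phi_\epsilon<1$. Hence $\phi_\epsilon''-c\phi_\epsilon'+\phi_\epsilon>0$ on all of $\R$. Now $\phi_\epsilon(t)\to0$ as $t\to-\infty$ and $\phi_\epsilon>0$, so by the standard sliding/comparison argument against the solutions $e^{\lambda_\pm t}$ of $y''-cy'+y=0$ with $\lambda_\pm = (c\pm\sqrt{c^2-4})/2$: if $c\ge 2$ these roots are real and positive, and one shows $\phi_\epsilon$ cannot decay to $0$ at $-\infty$ while being a positive supersolution of this linear equation — precisely, comparing with $\delta e^{\lambda_- t}$ and letting $\delta\to0$ forces $\phi_\epsilon\le 0$ near $-\infty$, a contradiction; if $c\ge2$ one instead uses that a positive supersolution of $y''-cy'+y>0$ tending to $0$ at $-\infty$ is impossible because the indicial roots are real positive and the comparison principle on $(-\infty,t_0]$ squeezes $\phi_\epsilon$ below any exponential $\delta e^{\lambda_- t}$. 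I expect the clean way is: suppose $c\ge2$; choose $t_0$ and note $z:=\phi_\epsilon$ satisfies $z''-cz'+z\ge0$, $z>0$, $z(-\infty)=0$; the function $w=\delta e^{\lambda_- t}$ satisfies $w''-cw'+w=0$; pick $\delta$ so $w(t_0)\ge z(t_0)$, then on $(-\infty,t_0]$ the difference $w-z$ satisfies $(w-z)''-c(w-z)'+(w-z)\le0$ with $(w-z)(t_0)\ge0$ and $(w-z)(-\infty)=0$, and the maximum principle for this operator (whose potential $+1>0$ — careful, wrong sign) gives $w\ge z$; then let $\delta\to0^+$ to get $z\le0$ on $(-\infty,t_0]$, contradicting $z>0$. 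The main obstacle is handling the sign of the zeroth-order coefficient correctly so the comparison principle applies; I would fix this by working with the shifted function or by restricting to a half-line where the effective potential in the $\phi$-equation is bounded, and by invoking the same exponential-barrier argument used for Fisher-KPP-type equations. This yields $c_*(\epsilon)<2$ for all small $\epsilon>0$, and passing to the $\liminf$ gives $c_\star\le2$; strict inequality $c_\star<2$ then follows because $c_\star=2$ would force $\psi_\star\equiv1$ (from \eqref{arg} and the discussion there) and then the limiting $\phi_\star$-equation $\phi_\star''-c_\star\phi_\star'+\phi_\star(1-\phi_\star)=0$ with $c_\star=2$ has the monotone front, which is fine — so I must instead rule out $c_\star = 2$ by a sharper a priori estimate, e.g. showing $c_*(\epsilon)$ is bounded away from $2$ using that for $r>1$ the quantity $1-r<0$ strictly shifts the admissible speed interval; this borderline case is where I expect the argument to require the most care, and I would resolve it by the same barrier comparison applied uniformly in $\epsilon$, noting the $\epsilon\psi(1-\psi)$ perturbation does not affect the leading-edge estimate for $\phi_\epsilon$.
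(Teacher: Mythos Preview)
Your treatment of the lower bound is fine: positivity of $c_\star$ is Lemma~\ref{LE23}, and the contradiction argument you sketch for $c_*(\epsilon)>0$ for all small $\epsilon$ is sound.

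The upper bound, however, has a genuine gap. Your key inequality $\phi_\epsilon''-c\phi_\epsilon'+\phi_\epsilon>0$ is correct (it is equivalent to the statement that $u(t,x)=\phi_\epsilon(x+ct)$ is a strict subsolution of the KPP equation $u_t=u_{xx}+u(1-u)$), but the comparison you run from it does not close. First, the step ``pick $\delta$ so that $w(t_0)\ge z(t_0)$, then let $\delta\to 0^+$'' is inconsistent: the constraint forces $\delta\ge \phi_\epsilon(t_0)e^{-\lambda_- t_0}>0$, so $\delta$ cannot tend to $0$. Second, as you yourself flag, the operator $L=D^2-cD+1$ has the wrong sign on the zeroth-order coefficient, so the maximum principle you invoke on $(-\infty,t_0]$ is not available; the ``shifted function / bounded-potential half-line'' fix you mention is too vague to repair this. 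In fact, the bare inequality $L[\phi_\epsilon]>0$ together with $\phi_\epsilon>0$, $\phi_\epsilon(-\infty)=0$, $\phi_\epsilon(+\infty)=1$ does \emph{not} by itself exclude $c\ge 2$: any sufficiently fast-decaying monotone profile (e.g.\ with decay rate $\mu>\lambda_+$) satisfies all of these for $c>2$. One needs additional input at $-\infty$.

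The paper's proof supplies exactly that input and then argues at the PDE level. From \cite[Lemma~13]{TPTa} one has $\phi_\epsilon(t)=e^{\mu t}+O(e^{(2c-\epsilon)t})$ at $-\infty$ with $\mu=\tfrac12\big(c+\sqrt{c^2+4(r-1)}\big)>2$ whenever $c\ge 2$. One then compares $u=\phi_\epsilon(x+ct)$ with the solution $\bar u$ of the KPP equation sharing the same initial data $\bar u(0,\cdot)=\phi_\epsilon(\cdot)$; the inequality you derived gives $u\le \bar u$. Because $\phi_\epsilon(t)e^{-2t}\to 0$ at $-\infty$, Bramson's theorem places $\bar u(t,\cdot)$ near the minimal KPP front at position $2t-\tfrac32\ln t$. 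Evaluating $u\le \bar u$ in the moving frame $x=-ct$ gives $0<\phi_\epsilon(0)\le \bar u(t,-ct)\to \tilde\phi\big((2-c)t-\tfrac32\ln t\big)\to 0$ for every $c\ge 2$, a contradiction. The logarithmic correction is precisely what disposes of the borderline case $c=2$ that you found troublesome, so no separate ``uniform-in-$\epsilon$'' sharpening is needed.
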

\begin{proof} For a given $\epsilon>0,$ consider the speed $c_*(\epsilon)$, the situation with $c_\star$ is similar. On the contrary, suppose that $c_*(\epsilon)\geq 2$. 
Then \cite[Lemma 13]{TPTa} assures that the profile $\phi_\epsilon(t)$ has the following asymptotic representation at $-\infty$ (up to a translation): 
 $$\phi_\epsilon(t) =e^{\mu t} + O(e^{(2c_*(\epsilon)-\epsilon)t}), \quad \mbox{where} \ \mu = 0.5(c_*(\epsilon)+\sqrt{c_*(\epsilon)^2+4(r-1)}) >2.  $$
The solution $u=\phi_\epsilon(x+c_*(\epsilon)t)$ satisfies the relation 
$$u_{xx}(t,x)-  u_t(t,x)  + u(t,x)(1-u(t,x))=ru(t,x)z(t-h,x) > 0, \ \ z:=1-\psi_\epsilon(x+c_*(\epsilon)t),$$
and therefore, due to the maximum principle, $u(x,t)\leq \bar u(x,t)$, where $\bar u(x,t)$ is the solution of the initial 
value problem for 
\begin{equation}\label{KPP}
u_{xx}(t,x)-  u_t(t,x)  + u(t,x)(1-u(t,x))=0,
\end{equation}
with the initial data $\bar u(0,x)= \phi_\epsilon(x)$.

Since $\phi_\epsilon(t)e^{-2t} \to 0$ as $t \to -\infty$, due to the well known Bramson result (e.g. see \cite[Example 2, p. 7]{bram}), 
$$\lim_{t \to +\infty}\sup_{x \in \R}|\bar u(t,x)-\tilde \phi(x+2t-  (3/2)\ln t)|=0,$$ 
where $\tilde \phi$ denotes the profile of  the monotone minimal wavefront of the KPP-Fisher equation (\ref{KPP}). 
 Then  the  inequality $\phi_\epsilon(x+c_*(\epsilon)t) < \bar u(t,x)$ implies that 
$c_*(\epsilon) < 2$.
\end{proof}

%%%%%%%%%%%%%%%%%%%%%%%%%%%%%%%%%%%%%%%%%%%%%%%%%%%%%%%%%%%%%%%%%%%%%%%%%%%%%%%%%%%%%%%

\newpage 

\section{Proof of the main result}
\subsection{Perturbed system: { strongly unstable local manifold} at $(0,0)$} \hfill

Eigenvalues of  (\ref{difference1hr})  at $(0,0)$ are: $\lambda_1=\lambda_1(c, r)<0<\lambda_2=\lambda_2(c,r)$ (the first equation); 
$\mu_1=\mu_1(c, \varepsilon)\leq 0<\mu_2=\mu_2(c,\varepsilon)$ (the second equation); 
$$
\mu_{2,1}=\frac{c\pm\sqrt{c^2+4\varepsilon}}{2}, \quad \lambda_{2,1}=\frac{c\pm\sqrt{c^2+4(r-1)}}{2}.
$$
The system (\ref{difference1hr}) defines a semiflow $\Pi^t$ in the phase space of quads $$\frak{q}=(\phi_0, \phi'_0, \psi_0(\cdot), \psi'_0) \in \R^2\times C([-2h,0], \R)\times \R.$$
For simplicity, we will denote this space as $\R^3\times C$. Specifically, each such quad determines a unique solution 
$(\phi(t), \psi(t))$ of the system such that 
$$
\phi(0)=\phi_0, \ \phi'(0)=\phi'_0, \ \psi(s)=\psi_0(s), \ s \in [-2h,0], \ \psi'(0)=\psi'_0, 
$$
existing on some half-open maximal interval $I(\frak q)$. 
Then,  for $t \in I(\frak q)$, $$\Pi^t(\phi_0, \phi'_0, \psi_0(\cdot), \psi'_0) = (\phi(t), \phi'(t), \psi(t+\cdot), \psi'(t)).$$
Clearly, for all values of the parameters, the semiflow $\Pi^t$ has the fixed point zero. We will prove that the local unstable manifold $\gamma_\epsilon$ of this  zero equilibrium (existing for all $\epsilon >0$) depends continuously on $\epsilon$ and admits a continuous extension  up to the limit value $\epsilon =0$.

\begin{theorem} \label{invman} Suppose that $r>1, \epsilon, b , c>0$. There exists an open neighbourhood $\mathcal O \subset \R^2$ of $(0,0) \in \R^2$, $\epsilon_0 \in (0,1)$ and a continuous map
$$\gamma: [0, \epsilon_0] \times [0.75c_\star,2]\times \mathcal O  \to \R^3\times C,$$  such that, for each fixed $ (\epsilon,c) \in [0, \epsilon_0] \times [0.75c_\star,2]$, the map $\gamma$ is an embedding and the two-dimensional manifold  $\gamma(\epsilon, c, \mathcal O)$ is contained in the strongly unstable manifold  of the zero steady state (i.e. in the  manifold  of the initial data for the negative orbits  converging exponentially to $0$ as $t \to -\infty$, cf. \cite{HL}).  
\end{theorem}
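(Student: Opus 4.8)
The plan is to build $\gamma$ through a Lyapunov--Perron construction carried out uniformly in $(\epsilon,c)\in[0,\epsilon_0]\times[0.75c_\star,2]$, the decisive point being that although the spectrum of the linearisation of (\ref{difference1hr}) at $0$ degenerates as $\epsilon\to 0^+$ (the eigenvalue $\mu_1=\mu_1(c,\epsilon)$ merges into $0$), its \emph{strongly unstable} part does not. The only eigenvalues with positive real part are $\lambda_2=\lambda_2(c,r)$ and $\mu_2=\mu_2(c,\epsilon)$, and on the parameter box (we may take $\epsilon_0<r-1$, consistently with Claim 1) one has the uniform bounds $\lambda_2\ge\sqrt{r-1}$ and $\mu_2\ge 0.75c_\star>0$ by Lemma \ref{LE23}; the rest of the spectrum of the generator $A=A(\epsilon,c)$ of the linearised semiflow $\Pi^t$ on $X:=\R^3\times C$ consists only of $\lambda_1<0\ge\mu_1$ (the linearisation being delay-free, since the delayed factor $r\phi(t)\psi(t-ch)$ is quadratic, $\Pi^t$ linearised slaves the history after time $2h$ to a four-dimensional linear flow, so its generator has no further spectrum), and the corresponding semigroup is bounded for $t\ge 0$. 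Thus $\{\lambda_2,\mu_2\}$ is separated from $\{\lambda_1,\mu_1\}$ and from the rest of the half-plane $\{\mathrm{Re}\,z\le 0\}$ by the uniform gap $\kappa_0:=\min\{\sqrt{r-1},0.75c_\star\}>0$ for every $(\epsilon,c)$ in the box, $\epsilon=0$ included. Consequently the spectral projection $P^u=P^u(\epsilon,c)$ onto the two-dimensional strongly unstable subspace $E^u(\epsilon,c)$ is given by $\tfrac{1}{2\pi i}\oint(zI-A)^{-1}\,dz$ over a fixed circle enclosing $\{\lambda_2,\mu_2\}$ and nothing else, so $P^u(\epsilon,c)$, a continuously chosen normalised basis $\{v_1(\epsilon,c),v_2(\epsilon,c)\}$ of $E^u(\epsilon,c)$, and the constant $M$ in the dichotomy estimates
$$
\|e^{At}P^u\|\le M e^{\kappa_0 t}\ \ (t\le 0),\qquad \|e^{At}(I-P^u)\|\le M\ \ (t\ge 0)
$$
all extend continuously, respectively stay uniform, up to $\epsilon=0$.

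Fix $\kappa\in(0,\kappa_0)$. First I would recast (\ref{difference1hr}) as $z'=A(\epsilon,c)z+F(\epsilon,c,z)$ on $X=\R^3\times C$, with $F$ smooth (polynomial in the components, composed with the bounded evaluation $\psi\mapsto\psi(-ch)$ on $C([-2h,0],\R)$, legitimate since $0\le ch\le 2h$), jointly continuous in $(\epsilon,c)$, and satisfying $F(\epsilon,c,0)=0$, $D_zF(\epsilon,c,0)=0$. For $\xi\in E^u(\epsilon,c)$ with $\|\xi\|$ small I would then solve, in the Banach space $BC_\kappa:=\{z\colon(-\infty,0]\to X\ \text{continuous}:\ \|z\|_\kappa:=\sup_{t\le 0}e^{-\kappa t}\|z(t)\|<\infty\}$, the Lyapunov--Perron equation
$$
z(t)=e^{At}\xi+\int_0^t e^{A(t-s)}P^uF(\epsilon,c,z(s))\,ds+\int_{-\infty}^t e^{A(t-s)}(I-P^u)F(\epsilon,c,z(s))\,ds,\quad t\le 0.
$$
Since $F$ has vanishing linear part, its Lipschitz constant on a small ball of $BC_\kappa$ can be made as small as needed, while the linear estimates above are uniform in $(\epsilon,c)$; hence the right-hand side is a uniform contraction on a fixed small ball, whose unique fixed point $z(\cdot;\xi,\epsilon,c)$ is a negative semi-orbit of (\ref{difference1hr}) with $\|z(t;\xi,\epsilon,c)\|\le Ce^{\kappa t}\to 0$ as $t\to-\infty$. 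By the uniform contraction principle $z(\cdot;\xi,\epsilon,c)$ is jointly continuous in $(\xi,\epsilon,c)$; by the standard smoothing (fibre contraction) argument it is $C^\infty$ in $\xi$ for fixed $(\epsilon,c)$; and applying $P^u$ at $t=0$ yields the identity $P^u z(0;\xi,\epsilon,c)=\xi$.

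Finally I would set $\gamma\bigl(\epsilon,c,(a,b)\bigr):=z\bigl(0;\,a\,v_1(\epsilon,c)+b\,v_2(\epsilon,c),\,\epsilon,c\bigr)\in X=\R^3\times C$ for $(a,b)$ in a fixed neighbourhood $\mathcal O\subset\R^2$ of the origin whose radius is independent of $(\epsilon,c)$ --- possible because $\|v_i(\epsilon,c)\|$ is bounded above and below on the compact parameter box. Then $\gamma$ is jointly continuous, being a composition of the jointly continuous $z$ and $v_1,v_2$. For fixed $(\epsilon,c)$ the map $(a,b)\mapsto\gamma\bigl(\epsilon,c,(a,b)\bigr)$ is $C^\infty$, and $P^u$ composed with it equals the linear isomorphism $(a,b)\mapsto a v_1(\epsilon,c)+b v_2(\epsilon,c)$ onto $E^u(\epsilon,c)$; hence it is injective with injective differential, and, shrinking $\mathcal O$ once more, it is a homeomorphism onto its image, i.e.\ an embedding, whose image is a two-dimensional manifold. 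Since every point of $\gamma(\epsilon,c,\mathcal O)$ is, by construction, the initial value of a negative orbit with $\|z(t)\|=O(e^{\kappa t})$, it lies in the strongly unstable manifold of the zero steady state in the sense of \cite{HL}, and the theorem follows. I expect the heart of the matter --- and the reason this is not a mere citation of the FDE unstable manifold theorem --- to be exactly this uniformity up to $\epsilon=0$: one must see that the strongly unstable spectral data and the dichotomy constants survive the degeneration at $\epsilon=0$, where $\mu_1=0$ creates a centre direction, and it is the uniform gap $\kappa_0>0$ together with the delay-freeness of the linearisation that makes the standard Lyapunov--Perron scheme go through with constants independent of the parameters.
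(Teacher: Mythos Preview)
Your proof and the paper's are the same Lyapunov--Perron construction at heart, and you correctly isolate the crux: the strongly unstable eigenvalues $\lambda_2,\mu_2$ remain uniformly bounded below by $\min\{\sqrt{r-1},\,0.75c_\star\}>0$ while $\mu_1\to 0$, so a fixed-point argument in an exponentially weighted space of negative semi-orbits goes through uniformly on the closed parameter box, $\epsilon=0$ included.

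The execution differs in two respects worth noting. First, the paper never invokes an abstract spectral projection $P^u$ on the infinite-dimensional phase space. Because the linear part of (\ref{difference1hr}) at $0$ is delay-free and decouples into two scalar second-order equations, the paper writes explicit scalar integral operators $\mathcal A_1,\mathcal A_2$ for $\phi$ and $\psi$ separately, using the Green's kernels built from $\lambda_i,\mu_i$, and works in the weighted space $C_w$ of pairs $(\phi,\psi)$ on $(-\infty,0]$ with weight $e^{-0.25c_\star t}$. The uniformity in $\epsilon$ and the continuity of the fixed point in all parameters then reduce to elementary explicit inequalities, sparing one the check that the FDE resolvent, the projection $P^u(\epsilon,c)$, and the dichotomy constant $M$ all extend continuously and stay bounded at $\epsilon=0$; these facts are true but require the Hale--Lunel machinery you implicitly appeal to. Second, the paper parametrizes the manifold by $(\alpha,\beta)=(\phi(0),\psi(0))$ rather than by coordinates on $E^u$ via a moving basis $v_1(\epsilon,c),v_2(\epsilon,c)$; this is exactly the chart used downstream in Section~3.3, where the approximating wavefronts are tracked by the last point $(\phi_j(s_j),\psi_j(s_j))$ at which they meet $\partial U$. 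Your abstract route is valid, but to make it airtight you should (i) replace the informal ``$z'=Az+F(z)$ on $X$'' by the FDE variation-of-constants formula that actually underlies your Lyapunov--Perron equation, and (ii) justify the uniform resolvent bound on the fixed contour. The paper's explicit formulas sidestep both issues.
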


\subsection{Proof of Theorem \ref{invman}} \label{Sub32}
In what follows, $C_w$ denotes the Banach space of continuous functions $(\phi(t), \psi(t))$ defined for all $t \leq 0$ and such that their norm below is finite: 
$$
\|(\phi, \psi)\|_w = \max\{|\phi|_w, |\psi|_w\}, \quad \mbox{where} \ \ |\phi|_w =\sup_{s \leq 0}|e^{-0.25c_\star s}\phi(s)| $$
Given $\rho, \epsilon \in (0,1)$ and $(\alpha,\beta)\in \R^2$ we will  also consider the closed ball $B_{\rho}\subset C_w$ of all elements 
satisfying the inequality 
$$
\|(\phi, \psi)\|_w \leq \rho. 
$$
We will look for solutions in  $B_{\rho}$ for the following system of integral equations 
$$
\phi(t)= \int_{-\infty}^te^{\lambda_1(t-s)}P(\phi,\psi)(s)ds +\int_{t}^0e^{\lambda_2(t-s)}P(\phi,\psi)(s)ds
$$
$$
+\left(\alpha- \int_{-\infty}^0e^{-\lambda_1s}P(\phi,\psi)(s)ds\right)e^{\lambda_2t}=: \mathcal{A}_1(\phi, \psi, \alpha)(t), 
$$
$$
\psi(t)= \int_{-\infty}^te^{\mu_1(t-s)}Q(\phi,\psi)(s)ds +\int_{t}^0e^{\mu_2(t-s)}Q(\phi,\psi)(s)ds
$$
$$
+\left(\beta- \int_{-\infty}^0e^{-\mu_1s}Q(\phi,\psi)(s)ds\right)e^{\mu_2t}=: \mathcal{A}_2(\phi, \psi, \alpha, \beta)(t), 
$$
where $\alpha, \beta$ are appropriately chosen parameters, $$
P(\phi,\psi)(s)= \frac{\phi(s)(-\phi(s)+r\psi(s-ch))}{\sqrt{c^2+4(r-1)}}, 
$$
$$
Q(\phi,\psi)(s)= \frac{b\mathcal{A}_1(\phi, \psi,\alpha)(s)(1-\psi(s))+ \epsilon \psi^2(s)}{\sqrt{c^2+4\epsilon}}.
$$

Note that $\phi(0)=\alpha$, $\psi(0)=\beta$, and each solution $(\phi(t),\psi(t))$ of the above integral equations satisfies the system (\ref{difference1hr}) for all $t \leq 0$.

Clearly, 
$$
|P(\phi,\psi)(s)| \leq \frac{1+r}{2\sqrt{r-1}} \rho^2 e^{0.5c_\star s}=:C_1\rho^2 e^{0.5c_\star s}, \quad s \leq 0, 
$$
$$
 \lambda_{1}=\frac{-2(r-1)}{c+\sqrt{c^2+4(r-1)}}\leq \frac{-(r-1)}{1+\sqrt{r}}=: \lambda_*^-, \  \lambda_{2}=\frac{c+\sqrt{c^2+4(r-1)}}{2} > c\geq 0.75c_\star,
$$
so that 
$$
|\mathcal{A}_1(\phi, \psi, \alpha)(t)| \leq  C_1\rho^2\left[\int_{-\infty}^te^{\lambda_1(t-s)}e^{0.5c_\star s}ds +\int_{t}^0e^{\lambda_2(t-s)}e^{0.5c_\star s}ds\right]+
$$
$$
\left(|\alpha|+ C_1\rho^2\int_{-\infty}^0e^{-\lambda_1s}e^{0.5c_\star s}ds\right)e^{\lambda_2t} \leq (|\alpha|+ C_2\rho^2)e^{0.5c_\star t}, \quad t \leq 0, 
$$
where 
$$
C_2 =\left(\frac{2}{0.5c_\star-\lambda_*^-} +\frac 4 c_\star\right)C_1
$$
does not depend on $\epsilon, \alpha, \beta$ and $c$. 
Clearly,  $|\mathcal{A}_1(\phi, \psi, \alpha)|_w \leq {\rho}$ once 
$$|\alpha|\leq \rho - C_2\rho^2.$$
Similarly, 
$$
|Q(\phi,\psi)(s)|\leq  \frac{2b(|\alpha|+ (C_2+b^{-1})\rho^2)e^{0.5c_\star s}}{c_\star}, \ \quad s \leq 0,
$$
$$
\mu_{1}=\frac{c-\sqrt{c^2+4\varepsilon}}{2} \leq 0, \quad \mu_{2}=\frac{c+\sqrt{c^2+4\varepsilon}}{2} \geq c \geq 0.75c_\star,$$
so that, for all  $t \leq 0$, 
$$
|\mathcal{A}_2(\phi, \psi, \alpha, \beta)(t)|  \leq \frac{16b(|\alpha|+ (C_2+b^{-1})\rho^2)e^{0.5c_\star t}}{c^2_\star} +|\beta|e^{0.5c_\star t}= $$
$$(|\alpha|C_3+|\beta|+ C_4\rho^2)e^{0.5c_\star t},  \quad \mbox{where} \ 
C_4 =16(C_2b+1)/c_\star^2, \ C_3= 16b/c_\star^2. 
$$
Consequently, $|\mathcal{A}_2(\phi, \psi, \alpha, \beta)|_w \leq {\rho}$ whenever 
$$
|\alpha|C_3+|\beta| \leq \rho -  C_4\rho^2. 
$$
In this way, for each $\rho \in (0, \min\{1,0.125/C_2,0.125/C_4\})$ there exists an open disc  $\mathcal O \subset \R^2$ centered  at $(0,0)$ (and not depending on the choice of $c \in [0.75c_\star,2] $ and  $\epsilon \in [0,1]$) such that the operator
$\mathcal{A}: = (\mathcal{A}_1, \mathcal{A}_2)$ maps $B_{\rho}$ into itself for each $(\alpha, \beta) \in \mathcal O$. 
Henceforth, we will fix one such $\rho >0$ and an associated disc $\mathcal O$. 
We claim that, under the above choice of $\rho$ and  $\mathcal O$, this mapping is a uniform contraction. Indeed, 
$$
e^{-0.5c_\star s}|P(\phi_2,\psi_2)(s) -P(\phi_1,\psi_1)(s)| \leq \rho \frac{(2+r)|\phi_2-\phi_1|_w +r|\psi_2-\psi_1|_w}{2\sqrt{r-1}} \leq 
$$
$$2C_1
\rho \|(\phi_2,\psi_2) - (\phi_1,\psi_1)\|_w, 
$$
$$
|\mathcal{A}_1(\phi_2, \psi_2,\alpha)- \mathcal{A}_1(\phi_1, \psi_1,\alpha)|_w\leq 2C_2
\rho \|(\phi_2,\psi_2) - (\phi_1,\psi_1)\|_w \leq $$ $$0.25\|(\phi_2,\psi_2) - (\phi_1,\psi_1)\|_w. 
$$
{Similarly, 
$$
|\mathcal{A}_2(\phi_2, \psi_2)- \mathcal{A}_2(\phi_1, \psi_1)|_w\leq 2C_4\rho(|\phi_2-\phi_1|_w +|\psi_2-\psi_1|_w) \leq 0.5\|(\phi_2,\psi_2) - (\phi_1,\psi_1)\|_w, $$}so that 
$$
\|\mathcal{A}(\phi_2, \psi_2,\alpha, \beta)- \mathcal{A}(\phi_1, \psi_1,\alpha, \beta)\|_w\leq 0.5\|(\phi_2,\psi_2) - (\phi_1,\psi_1)\|_w. 
$$
In this way, for each $(\alpha, \beta) \in \mathcal O$, we have proved the existence of a family of solutions $\phi(t,c,\epsilon, \alpha, \beta)$, 
$\psi(t,c,\epsilon, \alpha, \beta), t \leq 0,$ to system (\ref{difference1hr}) exponentially converging to $(0,0)$ as $t\to -\infty$ and such that 
$\phi(0,c,\epsilon, \alpha, \beta)=\alpha$, 
$\psi(0,c,\epsilon, \alpha, \beta)=\beta$. 

{Next, we observe that the uniform contraction $(\mathcal{A}_1, \mathcal{A}_2): B_{\rho}\to B_{\rho}$ depends continuously on the parameters $c,\epsilon, \alpha, \beta$ for each fixed $(\phi, \psi)\in B_\rho$. By using the Weierstrass M-test, this fact can be easily deduced from the uniform exponential estimates for the functions $P, Q$ in view of the point-wise continuity of all integrands in the expressions for operators $(\mathcal{A}_1, \mathcal{A}_2)$. For instance, let us show that 
$$
e^{-0.25c_\star t}\int_{-\infty}^te^{\mu_1(c, \epsilon)(t-s)}Q(\phi, \psi, \alpha, c, \epsilon)(s)ds \rightrightarrows $$
$$ e^{-0.25c_\star t}\int_{-\infty}^te^{\mu_1(c_0, \epsilon_0)(t-s)}Q(\phi, \psi,\alpha_0,c_0, \epsilon_0)(s)ds 
$$
uniformly in  $t \in (-\infty,0]$ once $(c, \epsilon, \alpha,\beta) \to (c_0, \epsilon_0, \alpha_0,\beta_0)$. Indeed, the integrand 
$${\mathcal I}(t,s,c, \epsilon, \alpha, \beta) = e^{-0.25c_\star t}e^{\mu_1(c, \epsilon)(t-s)}Q(\phi, \psi, \alpha, c, \epsilon)(s)$$ depends continuously on all variables and, for all $0\geq t \geq s$, $(\alpha, \beta) \in \mathcal O$, 
$$|{\mathcal I}(t,s,c, \epsilon, \alpha,\beta)|\leq |e^{-0.25c_\star t}Q(c, \epsilon)(s)|\leq  \frac{2b(|\alpha|+ (C_2+b^{-1})\rho^2)e^{0.5c_\star s}}{c_\star}e^{-0.25c_\star t}= $$
$$
\frac{2b(|\alpha|+ (C_2+b^{-1})\rho^2)e^{0.25c_\star s}}{c_\star}e^{-0.25c_\star (t-s)}\leq K e^{0.25c_\star s}, 
$$
where the constant $K$ does not depend on $t,s,c, \epsilon, \alpha,\beta$. 
}

This implies that the mapping 
$$
(\phi, \psi): [0,0.5] \times [0.75c_\star,2]\times \mathcal O \to B_\rho
$$
is continuous, 
cf. \cite[Section 1.2.6]{H}. 
Particularly, for each fixed pair $(\epsilon, c) \in [0,0.5] \times [0.75c_\star,2]$,
we can define the continuous injection
$$\gamma: [0,0.5] \times [0.75c_\star,2]\times \mathcal O  \to \R^3\times C,$$
by 
$$
\gamma(\epsilon, c, \alpha, \beta)= (\alpha, \phi'(0,c,\epsilon, \alpha, \beta), \psi(\cdot ,c,\epsilon, \alpha, \beta), \psi'(0,c,\epsilon, \alpha, \beta)). 
$$
Thus the set  $\gamma(\epsilon, c, \mathcal O )$ is 
a two-dimensional manifold, continuously depending on the parameters $\epsilon, c$. It is a part of a bigger two-dimensional negatively invariant manifold defined as 
$$
\Gamma(\epsilon, c)= \{(\phi(s,c,\epsilon, \alpha, \beta), \phi'(s,c,\epsilon, \alpha, \beta), \psi(s+\cdot ,c,\epsilon, \alpha, \beta), \psi'(s,c,\epsilon, \alpha, \beta)), s \leq 0\}.  
$$
This completes the proof of Theorem  \ref{invman}. \hfill $\Box$
\begin{remark}Fix some $\tau$ and consider the functions $$\phi_\tau(t,c,\epsilon, \alpha, \beta) = \phi(t+\tau,c,\epsilon, \alpha, \beta), \quad 
\psi_\tau(t,c,\epsilon, \alpha, \beta) = \psi(t+\tau,c,\epsilon, \alpha, \beta), \quad t \leq 0.$$
It is clear that  $$
\|(\phi_\tau, \psi_\tau)\|_w \leq \rho e^{0.25c_\star \tau} \leq \rho, \  \mbox{if} \ \tau \leq 0. 
$$
A straightforward verification also shows that 
$$
\mathcal{A}_1(\phi_\tau, \psi_\tau, \phi(\tau))(t) = \phi_\tau(t), \quad \mathcal{A}_2(\phi_\tau, \psi_\tau, \phi(\tau), \psi(\tau))(t) = \psi_\tau(t). 
$$
Hence, if $(\phi(\tau), \psi(\tau)) \in \mathcal O$, then the uniqueness statement of the  Banach contraction principle  implies that 
\begin{eqnarray}\label{pp}
\phi(t+\tau,c,\epsilon, \phi(0), \psi(0)) = \phi(t,c,\epsilon, \phi(\tau), \psi(\tau)), \\
\psi(t+\tau,c,\epsilon,  \phi(0), \psi(0)) = \psi(t,c,\epsilon,  \phi(\tau), \psi(\tau)). \nonumber
\end{eqnarray}
Note that the latter equality holds not only for $t \leq 0$ but also for all positive $t$, for which these functions are defined. 
\end{remark}
%%%%%%%%%%%%%%%%%%%%%%%%%%%%%%%%%%%%%%%%%%%%%%%%%%%%%%%%%%%%%%%%%%%%%%%%%%%%%%%%%%%%%%%%%%%%%%%%%%%%

\subsection{Proof of Theorem \ref{mainT}} We will take $\rho >0$ and an associated disc $\mathcal O$ fixed in Subsection \ref{Sub32}.  Then, for some fixed  $\delta \in (0,0.5)$, we consider the sequence  $(\phi_j(t), \psi_j(t))$ of bistable wavefronts defined in the proof of Lemma \ref{LE23} (Subsection \ref{s22}).  This sequence converges uniformly on $\R$  to some monotone traveling wave  $(\phi_\star(t), \psi_\star(t))$  propagating with the speed $c_\star$. We have already established that $(\phi_\star(+\infty), \psi_\star(+\infty))=(1,1)$ and $\phi_\star(-\infty) =0$,  $\psi_\star(-\infty)\in [0,1)$. Our goal is to prove that $\psi_\star(-\infty) =0$. 

To proceed, we  take $\delta_1\in (0,\delta)$ such that the Euclidean $\delta_1$-neighbourhood $U$ of $(0,0)$ is contained in $\mathcal O$.  
Then for every $(\phi_j(t), \psi_j(t)),$   $j=1,2,...$ we can find the unique  $s_j < 0$ such that $(\alpha_j, \beta_j):=(\phi_j(s_j), \psi_j(s_j)) \in \partial U$. Clearly, we can assume that $(\alpha_j, \beta_j) \to (\alpha_\star, \beta_\star)$ and that $s_j$ converges to some extended real number $s_*\leq 0$.  Next, in view of the asymptotic behaviour of $(\phi_j(t), \psi_j(t))$ at $-\infty$ (cf. the proof of Lemma \ref{LE25}), there exists  sufficiently large negative $\tau_j$ such 
that $(\hat\phi_j(\cdot), \hat\psi_j(\cdot)) := (\phi_j(\cdot+s_j+\tau_j), \psi_j(\cdot+s_j+\tau_j)) \in B_\rho$  and $(\tilde\alpha_j, \tilde\beta_j): = (\phi_j(s_j+\tau_j), \psi_j(s_j+\tau_j)) \in U \subset \mathcal O$. Since  $(\hat\phi_j(t), \hat\psi_j(t))$ solves the integral equations 
$$
\mathcal{A}_1(\hat\phi_j, \hat\psi_j, \tilde\alpha_j)(t) = \hat\phi_j(t), \quad \mathcal{A}_2(\hat\phi_j, \hat\psi_j, \tilde\alpha_j, \tilde\beta_j)(t) = \hat\psi_j(t),$$
the uniqueness part of the Banach contraction principle yields 
$$
(\phi_j(t+s_j+\tau_j), \psi_j(t+s_j+\tau_j)) = (\phi(t,c_j,\epsilon_j, \tilde \alpha_j, \tilde \beta_j), \psi(t,c_j,\epsilon_j, \tilde \alpha_j, \tilde \beta_j)). 
$$
Now, observe that, in view of the monotonicity of the wavefronts,  $$
(\phi_j(t+s_j+\tau_j), \psi_j(t+s_j+\tau_j)) \in U, \quad  t \in [0, -\tau_j].$$
Thus the functions  $\phi(t,c_j,\epsilon_j, \tilde \alpha_j, \tilde \beta_j), \psi(t,c_j,\epsilon_j, \tilde \alpha_j, \tilde \beta_j)$ (extended for all $t\in \R$ as solutions of (\ref{difference1hr})) satisfy 
$$
 (\phi(t,c_j,\epsilon_j, \tilde \alpha_j, \tilde \beta_j), \psi(t,c_j,\epsilon_j, \tilde \alpha_j, \tilde \beta_j)) \in U, \quad t \in [0, -\tau_j], 
$$
and 
$$
 (\phi(-\tau_j,c_j,\epsilon_j, \tilde \alpha_j, \tilde \beta_j), \psi(-\tau_j,c_j,\epsilon_j, \tilde \alpha_j, \tilde \beta_j)) = (\alpha_j, \beta_j). 
$$

Since (\ref{pp}) gives 
$$
 (\phi(t,c_j,\epsilon_j, \tilde \alpha_j, \tilde \beta_j), \psi(t,c_j,\epsilon_j, \tilde \alpha_j, \tilde \beta_j))=  (\phi(t+\tau_j,c_j,\epsilon_j,  \alpha_j, \beta_j), \psi(t+\tau_j,c_j,\epsilon_j, \alpha_j, \beta_j)), 
$$
we obtain 
$$
(\phi_j(t+s_j), \psi_j(t+s_j)) = (\phi(t,c_j,\epsilon_j,  \alpha_j, \beta_j), \psi(t,c_j,\epsilon_j,  \alpha_j,  \beta_j)). 
$$
Therefore we can conclude that, uniformly on $(-\infty, 0]$,  %there exists the following limit 
$$
\lim_{j \to +\infty}(\phi_j(t+s_j), \psi_j(t+s_j)) = (\phi(t,c_\star,0, \alpha_\star, \beta_\star), \psi(t,c_\star,0, \alpha_\star, \beta_\star))
$$
with  monotone  $(\phi(t,c_\star,0, \alpha_\star, \beta_\star), \psi(t,c_\star,0, \alpha_\star, \beta_\star)), \ t \leq 0,$ exponentially converging to $(0,0)$ as $t \to -\infty$ and with  $(\alpha_\star, \beta_\star) \in \partial U$. This shows that 
$s_*$ is a finite number (otherwise $\phi(t,c_\star,0, \alpha_\star, \beta_\star), \psi(t,c_\star,0, \alpha_\star, \beta_\star)$ are constant functions and $(\alpha_\star, \beta_\star)$ coincides with $(0,0)\not\in \partial U$) and that the function
 $$(\phi_\star(t+s_*), \psi_\star(t+s_*)) =(\phi(t,c_\star,0, \alpha_\star, \beta_\star), \psi(t,c_\star,0, \alpha_\star, \beta_\star))
$$  is a bistable wave connecting $(0,0)$ and $(1,1)$. \hfill $\square$

\appendix
\section{Bistable character of equation (\ref{1r}) for $r>1$}\label{ApA} \hfill 

In this section, we prove that the zero equilibrium of system (\ref{1hr}) is stable. In fact, we will show that each solution 
$(\phi(t), \psi(t))$ of the system starting in some small box $[0,\delta]^2$ behaves in the following way: the component 
$\phi(t)$ exponentially decreases to $0$ and $\psi(t)$ is strictly increasing but remains in the interval $[0, M\delta]$, where $M =2(1 + b/(r-1))$. 
Indeed, suppose that $\delta \in (0, 1-1/r)$ is small enough to satisfy 
$$
1- (1-\delta)e^{-b\delta/(r-1-r\delta)} < M\delta. 
$$
Consider the non-negative initial conditions $\phi(0) \in [0,\delta]$, $\psi(s) \in [0,\delta]$, $s \in [-h,0]$. 
If $\phi(0)=0$ then $\phi(t) = 0$ for all $t \geq 0$ and therefore $\psi(t) = \psi(0)$, $t \geq 0$.  Hence, the  stability claim holds trivially. 

Assume that 
$\phi(0)>0$. As a consequence, $\psi(t)$ will increase on $\R_+$ and $\phi(t), \psi(t)$ will take values in $[0, M \delta)$ for all $t$ from  some maximal non-empty interval $[0, T_\delta)$.   This implies that $\phi'(t) < \phi(t)(1-r+r\delta)$ so that $\phi(t)$ is exponentially decreasing on  $[0, T_\delta)$: $\phi(t) < \phi(0)\exp((1-r+r\delta)t)$,  $t \in (0, T_\delta)$. Thus in the case when $T_\delta$ is finite, it is determined by the condition $\psi(T_\delta)=M\delta$.  However, by  integrating the second equation in (\ref{1hr}), we find that 
$$
M\delta  =\psi(T_\delta)= 1- (1-\psi(0))\exp (-b\int_0^{T_\delta}\phi(s)ds)  < $$
$$ 1- (1-\delta)\exp (-b\delta\int_0^{+\infty}e^{(1-r+r\delta)s}ds)=  1- (1-\delta)e^{-b\delta/(r-1-r\delta)} < M \delta, 
$$
a contradiction. Thus $T_\delta=+\infty$ and we obtain that $\phi(t)$ is exponentially decreasing to $0$ and $\psi(t)$ is strictly increasing and bounded by $M\delta$. This proves our  initial claim.  
\begin{figure}[tbhp]
\centering
\subfloat[$b=2$, $r=0.5,$ monostable]{\label{fig:1a}\includegraphics[width=4.3cm]{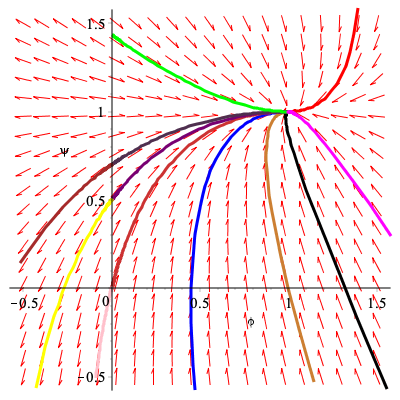}}
\subfloat[$b=2$, $r=2,$ bistable cases]{\label{fig:1b}\includegraphics[width=4.3cm]{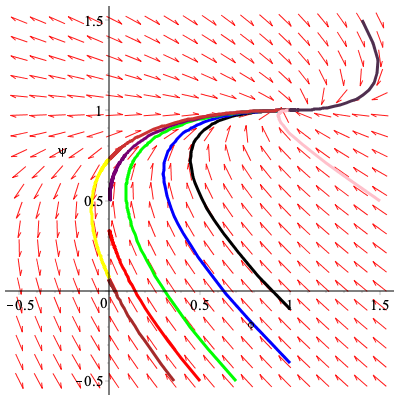}}
\caption{The phase plane analysis for \eqref{1hr} with $h=0$.}
\label{Fig1}
\vspace{-3mm}
\end{figure}

Figure 1(b) illustrates graphically the above discussion in the non-delayed case ($h=0$). 
 Note that the lines $\phi=0$ (vertical) and $\psi=1$ (horizontal) on Figure 1 are invariant. The line $\phi=0$ consists of steady states $(0, \zeta).$  After appropriate transformations, 
  each $\zeta \leq 1$ corresponds to the level $r(\zeta)=(1-\zeta)r$ of the bromide ion in the original system. One can ask about the existence of traveling waves connecting equilibria  $(0, \zeta)$ and $(1,1)$ in (\ref{1r}).   The homogeneous steady state  $(1,1)$ is always asymptotically stable and hyperbolic,  while the steady state 
$(0,0)$ is unstable  and nonhyberbolic  for $r<1$ and becomes nonhyperbolic stable for $r>1.$ 
Furthermore, for $r<1$ all equilibria $(0,\zeta)$,  $\zeta \in (0, 1)$,  
are unstable. 
For $r>1$ the equilibria $(0,\zeta)$ for $\zeta \in \left(-\infty, 1-1/r \right) $ are stable and the equilibria $(0,\zeta) $ for $\zeta \in \left(1-1/r, \infty \right) $ are unstable.   For the special value  $\zeta_0 = 1-1/r$ we obtain $r(\zeta_0)=1.$ The heteroclinic connections between the fixed points $(0, \zeta)$ and $(1,1)$ with $\zeta\in [\zeta_0,1)$ are  monostable, and the connections between the points  $(0,\zeta)$ and $(1,1)$ with $\zeta < \zeta_0$ are bistable (cf. \cite[p. 333]{Volp}).  Hence, 
the phase space analysis shows  that with respect to the equilibria $(0,0)$ and $(1,1)$,  system  (\ref{1r}) changes its character from a monostable system to a bistable one  at $r=1$ (see Figure 1).

\begin{figure}[tbhp]
\centering
\subfloat{\label{fig:3a}\includegraphics[width=4.3cm]{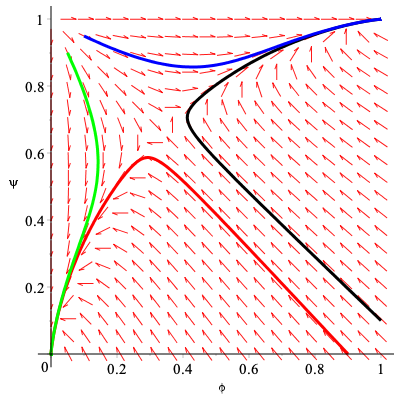}}
\subfloat{\label{fig:3b}\includegraphics[width=4.3cm]{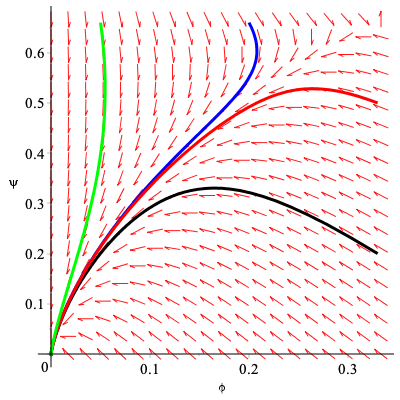}}
\subfloat{\label{fig:3c}\includegraphics[width=4.3cm]{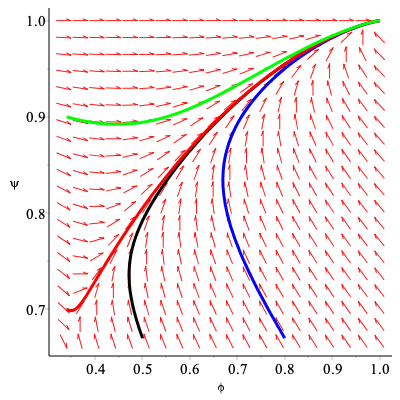}}
\caption{The phase plane analysis for \eqref{2hr} with $b=2$, $r=2$, $\epsilon = 1$ and $h=0$: (a) the whole system,   (b) and (c) two monostable subsystems.}
\label{Fig3}
\end{figure}
The perturbed system \eqref{2hr}
 \begin{equation}\label{2hr}
\begin{array}{ll}
    \phi'(t) = \phi(t)(1-r-\phi(t)+r\psi(t-h)),
    &    \\
    \psi'(t) = (b\phi(t)-\epsilon \psi(t))(1-\psi(t)), \ (\phi, \psi) \in \R^2,& 
\end{array}%
\end{equation}
has  for $h=0$ and $\epsilon>0$ two  stable hyperbolic equilibria $O$ and $\beta$ and two unstable equilibria $\alpha_1$ and $\alpha_2$ of a saddle-node type, see Figure 2 (a). 
Consequently, system \eqref{2hr} is of the classical bistable type and easier to analyse.
Note also that  if we restrict the picture to the invariant regions $\left[0,\frac{\epsilon(r-1)}{rb-\epsilon}\right] \times \left[ 0, \frac{b(r-1)}{rb-\epsilon} \right]$ and $\left[\frac{\epsilon(r-1)}{rb-\epsilon},1\right] \times \left[  \frac{b(r-1)}{rb-\epsilon} , 1\right]$ system \eqref{2hr} splits into two monostable subsystems, see Figure 2 (b) and (c).

%%%%%%%%%%%%%%%%%%%%%%%%%%%%%%%%%%%%%%%%%%%%%%%%%%%%%%%%%%%%%%%%%%%%%%%%%%%%%%%%%%%%%%%%%%%%%%%%%%%%

\section{Analysis of  system  (\ref{2hr}) with $r>1, \ \epsilon >0$}\label{ApB} \hfill

Recall that system (\ref{2hr})  has exactly four equilibria $$O=(0,0),\ \beta=(1,1), \ \alpha_1=(0,1),\  \alpha_2=\left(\frac{\epsilon(r-1)}{rb-\epsilon},\frac{b(r-1)}{rb-\epsilon}\right).$$  
The right-hand side of (\ref{2hr}) defines a continuous functional $F(\phi, \psi)$ defined on the space of continuous functions $C([-h,0], \R^2)$.  The restriction $$F=(F_1,F_2):  {\mathcal X}_\beta = C([-h,0], [0,1]^2) \to \R^2$$ of this functional is quasi-monotone \cite[p. 78]{Smith} in the sense that 
$$
F_1(\phi_1, \psi_1) \leq F_1(\phi_2, \psi_2), \quad \mbox{once} \ \phi_1(0) = \phi_2(0), \  \phi_1(s) \leq \phi_2(s), \  \psi_1(s)\leq \psi_2(s).   
$$
$$
F_2(\phi_1, \psi_1) \leq F_2(\phi_2, \psi_2), \quad \mbox{once} \ \psi_1(0) = \psi_2(0),\  \phi_1(s) \leq \phi_2(s), \  \psi_1(s)\leq \psi_2(s).   
$$
By \cite[Theorem 1.1, p. 78]{Smith}, system (\ref{2hr})  generates a monotone semi-flow on the phase space ${\mathcal X}_\beta$. In particular, the order intervals $[O, \alpha_2]_{{\mathcal X}_\beta}$ and  $[\alpha_2, \beta]_{{\mathcal X}_\beta}$ are invariant. 

Now, let $(\phi_t,\psi_t), \ t \geq 0,$ be the solution of (\ref{2hr})  with the initial data $(\phi_0,\psi_0) \in  {\mathcal X}_\beta$. Suppose that $\phi_p(0)=0$ for some $p \geq 0$. Then from the first 
equation of  (\ref{2hr})  we find that $\phi_t(0) =0$ for all $t \geq p$. Using the second equation of the system we find that either $\psi_t(0) \to 0$ as $t \to +\infty$ or $\psi_t(0) =1, \ t \geq p$. 

Similarly, if $\psi_q(0)=1$ for some $q \geq 0$, then from the second 
equation of  (\ref{2hr})  we find that $\psi_t(0) =1$ for all $t \geq q$. Then using the first equation of the system we find that either $\phi_t(0) \to 1$ as $t \to +\infty$ or $\phi_t(0) =0, \ t \geq q+h$. 

Next, if $\phi_p(0)=1$ for some $p > h$, we may conclude that $\phi_p'(0)=0$ and therefore $\psi_{p-h}(0)=1$. By the previous paragraph, $(\phi_t,\psi_t)=\beta$ for all $t \geq p$. 

Finally,  if $\psi_q(0)=0$ for some $q \geq 0$, then $\psi'_q(0)=0$ and therefore,  by the second equation in (\ref{2hr}), 
 $b\phi_q(0)= \epsilon \psi_q(0)=0$. Thus we may conclude that  $(\phi_t,\psi_t)=O$ for all $t \geq q$. 
  
 The above analysis shows that each non-convergent solution $(\phi_t,\psi_t), \ t \geq 0$, of  (\ref{2hr})  satisfies, for some $T>0$, the inequalities 
 $$
 0 < \phi_t(0), \psi_t(0) < 1, \quad t \geq T. 
 $$ 
Let us fix some $\tau>0$ and suppose that, for some $(\phi_0,\psi_0) \in {\mathcal X}_\beta \setminus \{O, \alpha_1, \alpha_2, \beta\}$ it holds  
$$
(\phi_\tau,\psi_\tau) = (\phi_0,\psi_0). 
$$
Thus we obtain  a $\tau$-periodic solution $(\phi(t),\psi(t)) = (\phi_t(0),\psi_t(0))$ with values in the open interval $(0,1)$. Clearly, this implies that 
$$
\int_0^\tau(1-r-\phi(s)+r\psi(s-h))ds = \int_0^\tau(b\phi(s)- \epsilon \psi(s))ds= 0, 
$$
from where the average values $(\bar\phi,\bar \psi)$ of $(\phi(t),\psi(t))$ satisfy 
$$
(\bar\phi,\bar \psi)= \alpha_2. 
$$
Consequently,  the elements $(\phi_0,\psi_0)$ and $\alpha_2$ are unordered in view of the monotonicity of the system. 
Clearly,   the pair $(\phi_0,\psi_0)$ and $\alpha_1$ also cannot be ordered. 

By the same reason, if for another initial data $(\phi_0^*,\psi_0^*) \in  {\mathcal X}_\beta \setminus \{O, \alpha_1, \alpha_2, \beta\}$ it holds  
$$
(\phi^*_\tau,\psi^*_\tau) = (\phi^*_0,\psi^*_0), 
$$
we obtain that $$
(\bar\phi^*,\bar \psi^*)= \alpha_2= (\bar\phi,\bar \psi). 
$$
If the elements $(\phi_0^*,\psi_0^*)$ and $(\phi_0,\psi_0)$ were ordered, (e.g. $(\phi_0^*,\psi_0^*) \geq (\phi_0,\psi_0)$), 
in view of the monotonicity of the system, the inequalities 
$$
\phi^*(t) \geq \phi(t), \psi^*(t) \geq \psi(t), \quad t \geq 0, 
$$ 
should be satisfied. This, however, would lead to a contradiction $(\bar\phi^*,\bar \psi^*)> (\bar\phi,\bar \psi)$ since the corresponding periodic solutions are different.

%%%%%%%%%%%%%%%%%%%%%%%%%%%%%%%%%%%%%%%%%%%%%%%%%%%%%%%%%%%%%%%%%%%%%%%%%%%%%%%%%%%%%%%%%%%%%%%%%%%

\section{Construction of a sub-solution} \hfill

Assume that $\epsilon \in (0, rb/7)$ is small enough to imply $\alpha_{21} < 1/6$ and consider the following function $$
U(x)=  \frac{1}{1+\rho_1e^{-\lambda x}}$$ with 
$$\rho_1= \frac{r(b-\epsilon)}{\epsilon(r-1)}$$
and $\lambda $ sufficiently small, which will be specified later. 

Note that $
U(0)= \alpha_{21}$ and $U(x)$ satisfies the differential equations
\begin{equation}\label{log}
u'(x)= \lambda u(x)(1-u(x)), \quad u''(x)= \lambda^2 u(x)(1-u(x))(1-2u(x)). 
\end{equation}

 Since $1-3U(0) >0$, for all small $\epsilon, \lambda >0$ there exists a unique point 
$$x_*= \frac{1}{\lambda}\ln \frac{r(b-\epsilon)}{2\epsilon(r-1)} \gg 1$$
such that $1-3U(x_*)=0$. Clearly, $1-3U(x) >0$ for all $x \in [0, x_*)$.    

Consider also $C^3$-smooth nonnegative function $\gamma(x)= \gamma(x, \epsilon, \lambda)$ defined as  
$
\gamma(x) = 0 \ \mbox{if} \ x \leq x_*,$ $\gamma(x) = (1-3U(x))^4(1-U(x)) \ \mbox{if} \ x \geq x_*$. It is easy to find that  $$\gamma''(x)= \lambda^2U(x)(1-U(x))P(U(x)), \quad  \gamma(x) \sim 16 \rho_1e^{-\lambda x}, \ x \to +\infty, 
$$
where $P$ is some real polynomial of degree 5.  
Let $\theta \in (0, 1/16)$ be a small parameter and $V(x)$ be defined from  the equation  
$$
1-r-U(x)+rV(x) =\theta \gamma (x). 
$$
Then 
$V(0) =\alpha_{22}
$, $1-V(x) \sim \rho_1r^{-1}e^{-\lambda x}(1-16\theta), \ x \to +\infty, $ $V'(x)= r^{-1}U'(x)>0$, $x \leq x_*$,  and, for some polynomial $Q$ of degree 4, it holds that 
$$
V'(x)= r^{-1}(U'(x)+ \theta \gamma'(x))= r^{-1}U'(x)(1+ \theta Q(U(x))) >0, \quad x \geq x_*, 
$$
$$
V'(x)\leq  U'(x), \quad x \geq 0, \quad 1+ \theta Q(U(x)) \in [0.5,1.5], 
$$
for all small $\theta >0$. Therefore 
$$
0 \leq \theta \gamma (x) = 1-U(x)- r(1-V(x)) \leq 1, \quad x \geq 0. 
$$
Next, for all $x \geq 0$ and $\lambda \in (0, \sqrt{\theta}/4)$, by using (\ref{log}), we find that 
$$
U''(x)+U(x)(1-r-U(x)+rV(x))= 
$$
$$
= U(x)\left(\lambda^2(1-2U(x))(1-U(x)) + \theta \gamma (x)\right) >0. 
$$
Note here that  $\Pi(x):=\lambda^2(1-2U(x))(1-U(x)) + \theta \gamma (x) = \lambda^2(1-2U(x))(1-U(x)) \geq 2\lambda^2/9$ for $x \in [0, x_*]$, $\Pi(x)= (1-U(x))(\lambda^2(1-2U(x))+ \theta  (1-3U(x))^4) >0$ when $U(x) \in [1/3,1/2]$, and $\Pi(x)= (1-U(x))(\lambda^2(1-2U(x))+ \theta  (1-3U(x))^4) \geq $ \mbox{ $(1-U(x))(\theta/16 - \lambda^2)>0$} when $U(x) \in [1/2,1)$. 

Finally, for $x \in [0, x_*]$, 
$$
V''(x) +(bU(x)-\epsilon V(x))(1-V(x))= \lambda^2r^{-1}U(x)(1-2U(x))(1-U(x)) + 
$$
$$
r^{-1}(rb- \epsilon)\left(U(x)-\alpha_{21} \right)(1-V(x)) >0, 
$$
as well as, for $x \geq x_*$, and all sufficiently small $\lambda >0$, 
$$
V''(x) +(bU(x)-\epsilon V(x))(1-V(x))\geq \lambda^2r^{-1}U(x)(1-2U(x))(1-U(x)) + \theta r^{-1}\gamma''(x)+ 
$$
$$
r^{-1}(rb- \epsilon)\left(\frac 1 3 -\alpha_{21}- \epsilon\theta(rb- \epsilon)^{-1}\gamma(x) \right)(1-V(x)) >(1-U(x)) \times
$$
$$
\left[\lambda^2r^{-1}U(x)\left\{1-2U(x) + \theta P(U(x)) \right\} +  r^{-2}\left(\frac{rb- \epsilon}{6} -\epsilon\right)(1- 16 \theta) \right]>0. 
$$
Hence, we constructed $C^3$- smooth functions $U(x), V(x)$ with positive derivatives on $\R_+$, satisfying the boundary conditions $(U, V)(0)= \alpha_2, \ (U, V)(+\infty)= \beta$ and the inequalities 
$$
U''(x)+U(x)(1-r-U(x)+rV(x)) >0, \ V''(x) +(bU(x)-\epsilon V(x))(1-V(x)) >0, \ x \geq 0. 
$$
Such functions exist for all sufficiently small $\epsilon$ (an explicit upper estimate for $\epsilon$ is presented above in this  appendix). Let $\eta: \R \to [0,1]$ be a smooth non-decreasing function such that $\eta(x) =0$ for all $x \leq x_0$,  $\eta(x) =1$ for $x \geq x_0+1$, where we choose $x_0\gg 1$ such that {$bU(x)-\epsilon V(x) \geq (1-V(x))br^2+0.5b$, $U(x) >r/(2r-1)$ for all $x \geq x_0$. Set
$$
U(t,x)= U(x+ct)- \mu r^2 \eta(x+ct)e^{-\delta t}, \  V(t,x)=  V(x+ct)- \mu  \eta(x+ct) e^{-\delta t},
$$
where  all new parameters are positive real numbers,  $\mu, \delta$ and $c$ are close to $0$. 
Then in the strip $0 \leq x+ct \leq x_0$, we clearly have that 
$$
U_{xx}(t,x) + U(t,x)(1-r-U(t,x)+rV(t-h,x)) - U_t(t,x)= \sigma_1:= 
$$
$$
U''(x+ct) + U(x+ct)(1-r-U(x+ct)+rV(x+ct -ch)) - cU'(x+ct) >0;
$$
$$
V_{xx}(t,x) +(bU(t,x)-\epsilon V(t,x))(1-V(t,x)) - V_t(t,x) = \sigma_4:=
$$
$$
V''(x+ct) + (bU(x +ct)- \epsilon V(x+ct))(1-V(x+ct)) - cV'(x+ct) >0$$
for all sufficiently small $c>0$. Let now $x+ct \geq x_0$. Then, for all sufficiently small positive $\mu, \delta$ and $c$,  
$$
U_{xx}(t,x) + U(t,x)(1-r-U(t,x)+rV(t-h,x)) - U_t(t,x)\geq  $$
$$\sigma_1+ r^2\mu(\sigma_2+ \sigma_3\eta(x+ct))e^{-\delta t}>0, 
$$
where $\sigma_1$ is defined above and 
$$
\sigma_2= -\eta''(x+ct)+c\eta'(x+ct), \quad
\sigma_3= 
r(1- V(x+ct))+U(x+ct)(2- r^{-1}e^{\delta h}) - 1-\delta +$$
 $$r\mu \eta(x+ct-ch)e^{-\delta (t-h)} - \mu r^2 \eta(x+ct)e^{-\delta t} >0.
$$
Note that, for all $\xi= x+ct \geq 0$, and appropriate $\xi_c \in [\xi -ch, \xi]$, 
$$
\sigma_1= U''(\xi) + U(\xi)(1-r-U(\xi)+rV(\xi)) +rU(\xi)(V(\xi -ch)-V(\xi)) - cU'(\xi)>
$$
$$
 U(\xi)(\lambda^2(1-2U(\xi))(1-U(\xi)) +$$ 
 $$
  \theta \gamma (\xi)- c \lambda(1-U(\xi))-ch\lambda  U(\xi_c)(1-U(\xi_c))(1+ \theta Q(U(\xi_c))) >
$$
$$
 U(\xi)\left(\lambda^2(1-2U(\xi))(1-U(\xi)) + \theta \gamma (\xi)- (2h+1)c\lambda (1-U(\xi))\right) >0  
$$
if $c>0$ is small enough. 
Finally, if $x+ct \geq x_0$, then 
$$
V_{xx}(t,x) +(bU(t,x)-\epsilon V(t,x))(1-V(t,x)) - V_t(t,x) =\sigma_4+ \mu(\sigma_2+ \sigma_5\eta(x+ct))e^{-\delta t}>0
$$
where 
$$
\sigma_5= (bU(x+ct)-\epsilon V(x+ct)) + (1-V(x+ct)+\mu\eta(x+ct)e^{-\delta t})(\epsilon  -br^2)- \delta>0. 
$$
%%%%%%%%%%%%%%%%%%%%%%%%%%%%%%%%%%%%%%%%%%%%%%%%%%%%%%%%%%%%%%%%%%%%%%%%%%%%%%%%%%%%%%%%%%%%%%%%%

\section{Construction of a  super-solution}

Let  impair integer $k > r/(r-1)$ be such that $k\alpha_{22}>1$. Assume that $\epsilon \in (0, rb/2)\cap (0,1)$. For small $\lambda \in (0, \epsilon/(k+1)^8)$, we will consider the positive increasing solution $v(x) = v(x, \lambda)$ of the boundary value problem 
$$
v'(x)= \lambda v(x)(1-v(x))^k, \ v(-\infty) = 0, \ v(+\infty) = 1, 
$$ 
normalised by the condition $v(0) = \alpha_{22}$. Then $v(x^*)= 1/k$ and $v(x_*)= 1/(k+1)$ for some $x_*<x^*=x^*(\lambda) <0$. 
Note that $x^*(\lambda) \to -\infty$ as $\lambda \to 0^+$, 
$$
v''(x)= \lambda ^2v(x)(1-v(x))^{2k-1}(1-(k+1)v(x)).  
$$ 
The smooth function $u(x)$ will be defined from the linear equation
$$
bu(x)-\epsilon v(x)= \lambda \gamma(x) \leq 0, 
$$
where  $\gamma(x)=0$ for $x \geq x^*$, $\gamma(x)=-\epsilon v(x)(v(x)-1/k)^4=: - \epsilon P_5(v(x))$ for $x \leq x^*$. We observe that 
$
u(0) = \alpha_{21}
$
and, for the polynomial $P_5(v)= v(v-1/k)^4$ , 
$$
u(x)= \frac{\epsilon}{b}\left( v(x)- \lambda P_5(v(x)) \right), \quad u'(x)= \frac{\epsilon}{b}v'(x)\left( 1- \lambda P_5'(v(x)) \right),\quad x \leq x^*,
$$
so that $u'(x) >0$ for all $x \leq 0$ if $\lambda>0$ is sufficiently small. 

Now, for $x \in [x^*,0]$, the inequality 
\begin{equation}\label{fv}
v''(x)+ (bu(x)-\epsilon v(x))(1-v(x)) <0
\end{equation}
amounts to 
$
\lambda ^2v(x)(1-v(x))^{2k-1}(1-(k+1)v(x)) <0 
$
and therefore it is satisfied. 
On the other hand, for $x \leq x^*$, inequality (\ref{fv}) is equivalent to 
$$
\lambda(1-v(x))^{2(k-1)}(1-(k+1)v(x)) <\epsilon\left(v(x)-\frac 1 k\right)^4, 
$$
which is obvious for $x \in [x_*,x^*]$ and also holds for $x \leq x_*$ since 
$$
\lambda (1-v(x))^{2(k-1)}(1-(k+1)v(x)) <\lambda  < \epsilon/(k+1)^8 < \epsilon\left(v(x)-\frac 1 k\right)^4, \ x \leq x_*. 
$$
Next,  for $x \in [x^*,0]$,  the inequality 
\begin{equation}\label{uf}
u''(x)+u(x)(1-r-u(x)+rv(x)) <0
\end{equation}
holds true since, after a simplification, we find that, for $x \in [x^*,0]$,
$$
\lambda ^2(1-v(x))^{2k-1}(1-(k+1)v(x)) + 1-r +(r- \epsilon/b)v(x) \leq 
$$
$$
\lambda ^2(1-v(x))^{2k-1}(1-(k+1)v(x)) + 1-r +(r- \epsilon/b)v(0) = $$
$$\lambda ^2(1-v(x))^{2k-1}(1-(k+1)v(x)) <0. 
$$

If we take $x \leq x^*$, then (\ref{uf}) is true because, for all sufficiently small $\lambda >0$ and some real polynomial  $P_{2k+4}(v)$ of degree $2k+4$, it holds 
$$
v''(x) + v(x)(1-r+(r-\epsilon/b)v(x))+
$$
$$
\lambda\left[-P_5(v(x))'' -P_5(v(x))\left(1-r +\left(r-\frac{2\epsilon}{b}\right)v(x)+\frac{\lambda\epsilon}{b}P_5(v(x))\right)\right] < 
$$
$$
v(x)(1-r+\frac{r-\epsilon/b}{k} + \lambda ^2)+
\lambda\left[-P_5(v(x))'' -P_5(v(x))\left(1-r\right)\right] <
$$
$$
v(x)\left(1-r+\frac{r-\epsilon/b}{k} + \lambda ^2+ \lambda \left[\lambda^2P_{2k+4}(v(x))+ \frac{r-1}{ k^4}\right]\right) <0. 
$$
Finally, let $\eta: \R \to [0,1]$ be a smooth non-decreasing function such that $\eta(x) =0$ for all $x \geq x_0$,  $\eta(x) =1$ for $x \leq x_0-1$, where we choose $x_0\ll x_*$. Set
$$
U(t,x)= u(x)+ \mu (\epsilon/3b)\eta(x)e^{-\delta t}, \  V(t,x)=  v(x)+ \mu  \eta(x) e^{-\delta t},
$$
where  all new parameters are positive real numbers,  $\mu$ is close to $0$. 
Then in the strip $x_0 \leq x \leq 0$, we  have that $U(t,x)= u(x), \ V(t,x)= v(x)$ and therefore 
$$
\mathcal D_1(U,V):= U_{xx}(t,x) + U(t,x)(1-r-U(t,x)+rV(t-h,x)) - U_t(t,x) <0, 
$$
$$
\mathcal D_2(U,V): = V_{xx}(t,x) +(bU(t,x)-\epsilon V(t,x))(1-V(t,x)) - V_t(t,x) <0. 
$$
Now, if $x \leq x_0$, then for all sufficiently small $\mu >0$, $\delta >0$, 
$$
\mathcal D_1(U,V)= \mathcal D_1(u,v) +  \mathcal D_{1}(\mu (\epsilon/3b)\eta(x)e^{-\delta t}, \mu  \eta(x) e^{-\delta t})+
$$
$$
\mu\eta(x)u(x)e^{-\delta t} (-2\epsilon/3b+ r e^{\delta h})+ \mu (r\epsilon/3b)\eta(x)e^{-\delta t}v(x)<0, 
$$
$$
\mathcal D_2(U,V) =  v''(x) + \mu \eta''(x)e^{-\delta t}  + (\lambda \gamma(x)-(2/3)\epsilon\mu\eta(x) e^{-\delta t})(1- v(x)- \mu  \eta(x) e^{-\delta t}) + $$
$$
\mu\delta  \eta(x) e^{-\delta t}<
v''(x) +  \lambda \gamma(x)(1- v(x))+  $$
$$\mu e^{-\delta t}\left[ \eta''(x) -\eta(x)\epsilon\left\{\frac 23 \left(1- \frac{1}{k+1}- \mu\right) -\frac{\lambda}{k^5}\right\}+\delta \eta(x)\right] <0. 
$$

\bibliographystyle{siamplain}
\bibliography{BZ-bistable-arxiv}
%\printbibliography

\end{document}